%
%
%

\documentclass[12pt,final]{article}


\usepackage[paper=letterpaper,margin=1in,twoside=false,includehead,headheight=18pt]{geometry}

\usepackage{todonotes}

\usepackage[utf8]{inputenc}

\usepackage{amsmath,amssymb,amsthm} 
\usepackage{thmtools,thm-restate}
\usepackage{graphicx}
\usepackage{showkeys}
\usepackage{mathtools}
\usepackage{booktabs}
\usepackage{tabularx}
\usepackage{enumitem}
\usepackage{textcomp}
\usepackage{cancel}
\usepackage{xparse}

\usepackage{pgfplots}

\pgfplotsset{compat=1.14}

\pgfplotsset{tick label style={font=\scriptsize},ticklabel shift=-3pt}
\pgfplotsset{mathy/.style={minor tick num=1, ticks=major, axis x line=middle, axis y line=middle, grid=both}}

\usepackage[draft]{fixme}

\newtheorem{theorem}{Theorem}[section]
\newtheorem{lemma}[theorem]{Lemma} 
\newtheorem{proposition}[theorem]{Proposition} 
\newtheorem{corollary}[theorem]{Corollary}

{\theoremstyle{definition} 
 
\newtheorem*{remark}{Remark}

 }

\newtheoremstyle{named}%
  {}{}						
  {\upshape}				
  {0pt}{\bfseries}			
  {.}						
  {.5em}					
  {\thmname{#1}\thmnote{ #3}}  

\theoremstyle{named}




\newcommand{\R}{\mathbb{R}}
\newcommand{\N}{\mathbb{N}}

\renewcommand{\d}{\delta}

\def\indefinite#1d#2{{\int #1 \, d#2}}
\def\definite#1d#2#3#4{{\int_{#3}^{#4} {#1} \, d#2}}
\def\dby#1#2{\frac{d{#1}}{d{#2}}}

\DeclarePairedDelimiter\floor{\lfloor}{\rfloor}

\DeclarePairedDelimiter\parens{(}{)}

\DeclarePairedDelimiterX\setof[2]{\{}{\}}{#1\,:\,#2}

\newcommand{\disp}{\displaystyle}

\DeclareGraphicsRule{*}{mps}{*}{}

%
%

\renewcommand{\k}[1]{k_{#1}}
\newcommand{\kt}{\k{t}}
\newcommand{\ft}{\tilde{f}_{r+1,t-1}}

\newcommand{\s}[1]{s_{#1}}
\newcommand{\sr}{\s{r}}

\newcommand{\e}{\varepsilon}
\renewcommand{\l}{\ell}

\newcommand{\on}{\mathop{\circ}}
\newcommand{\inv}{^{-1}}

\renewcommand{\a}{\alpha}
\renewcommand{\b}{\beta}
\newcommand{\phimin}{\phi_{\text{min}}}
\newcommand{\pk}{\phi_{\text{key}}}

\newcommand{\Lab}[1][a,b]{L_{#1}}
\newcommand{\Lr}{\Lab[r-1,1]}
\newcommand{\Fab}[1][a,b]{F_{#1}}

\DeclareMathOperator{\ex}{ex}

\begin{document}

\title{Supersaturation for subgraph counts}
\date{\today}
\author{Jonathan Cutler\thanks{Supported in part by the Simons Foundation under Grant No.~524435.}\\
\small{Montclair State University}\\
\small{\texttt{jonathan.cutler@montclair.edu}}
 \and 
JD Nir\\
\small{University of Manitoba}\\
\small{\texttt{jd.nir@umanitoba.ca}}
 \and
A.~J.~Radcliffe\thanks{Supported in part by the Simons Foundation under Grant No.~429383.}\\
\small{University of Nebraska-Lincoln}\\
\small{\texttt{jamie.radcliffe@unl.edu}}}
\maketitle

\begin{abstract}
	The classical extremal problem is that of computing the maximum number of edges in an $F$-free graph.  In particular, Tur\'an's theorem entirely resolves the case where $F=K_{r+1}$.  Later results, known as supersaturation theorems, proved that in a graph containing more edges than the extremal number, there must also be many copies of $K_{r+1}$.  
	
	Alon and Shikhelman introduced a broader class of extremal problems, asking for the maximum number of copies of a graph $T$ in an $F$-free graph (so that $T=K_2$ is the classical extremal number).  In this paper we determine some of these generalized extremal numbers when $T$ and $F$ are stars or cliques and prove some supersaturation results for them.

\end{abstract}

\section{Introduction} 
\label{sec:introduction}

The classic theorem of Tur\'an \cite{T} gives the maximum number of edges in a $K_{r+1}$-free graph, a number which is asymptotically $(1-\frac{1}r)\binom{n}2$.  As is standard, we let $\ex(n,F)$ be the maximum number of edges in an $F$-free graph on $n$ vertices, so Tur\'an's theorem determines $\ex(n,K_{r+1})$.  Of course, if the number of edges in a graph $G$ on $n$ vertices exceeds $\ex(n,K_{r+1})$, we know that $G$ must contain at least one $K_{r+1}$.  One can ask about the minimum number of copies of $K_{r+1}$ that are contained in $G$.  Results of this type are referred to as \emph{supersaturation} theorems.  To be precise, letting $k_{r+1}(G)$ be the number of copies of $K_{r+1}$ in a graph $G$, supersaturation questions ask one to determine
\[
	\min \setof{k_{r+1}(G)}{\text{$G$ a graph with $n$ vertices and $\ex(n,K_{r+1})+q$ edges}},
\]
for some $q\geq 1$.  When $q=o(n^2)$, the problem was studied by Rademacher \cite{R}, Erd\H{o}s \cite{E,E1,E2}, and then resolved by Lov\'asz and Simonovits \cite{LS,LS1}.  For the case when $q=\Omega(n^2)$, asymptotic solutions have been found by Razborov \cite{Rz} for $r=2$, Nikiforov \cite{N1} for $r=3$, and Reiher \cite{Rei} for general $r$.  See Pikhurko and Yilma \cite{PY} for a very informative introduction to supersaturation.

One could also ask if other structures are guaranteed to exist in graphs with more edges than the Tur\'an number.  The following theorem of Erd\H{o}s and Stone \cite{ES} shows that, in a graph in which the edge count exceeds this extremal number by a constant multiple of $n^2$, must not only contain a $K_{r+1}$, but indeed a blowup of $K_{r+1}$ with large part sizes.  For a graph $G$, we let the blowup $G(b)$ be the graph where each vertex of $G$ is replaced by an independent set of size $b$ and each edge is replaced by a complete bipartite graph.    

\begin{theorem}[Erd\H{o}s-Stone]\label{thm:es}
    Let $r\geq 1$ be an integer and let $\e>0$.  Then there exists $n_0=n_0(r,\e)$ such that if $G$ is a graph on $n\geq n_0$ vertices and
    \[
        e(G)\geq \parens[\Big]{1-\frac{1}r+\e}\binom{n}2,
    \]
    then $G$ contains $K_{r+1}(b)$ for some $b\geq \e\log n/(2^{r+1}(r-1)!)$.
\end{theorem}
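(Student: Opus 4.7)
The plan is to proceed by induction on $r$, following the classical proof strategy for the Erd\H{o}s--Stone theorem. The base case $r = 1$ asks for a copy of $K_{b,b}$ with $b \geq \varepsilon \log n / 4$ whenever $G$ has at least $\varepsilon\binom{n}{2}$ edges. I would apply the familiar K\H{o}v\'ari--S\'os--Tur\'an double-count: summing $\binom{d(v)}{b}$ over $v$ and using convexity produces at least $n\binom{\varepsilon(n-1)}{b}$, while a graph containing no $K_{b,b}$ has this sum bounded above by $(b-1)\binom{n}{b}$. Comparing the two sides and solving for $b$ yields the claimed lower bound once $n$ is large enough.

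For the inductive step, I would first clean up $G$ by iteratively deleting any vertex whose degree in the current graph is less than $(1 - 1/r + \varepsilon/2)$ times the current number of vertices. A standard bookkeeping argument shows that this process halts while a constant fraction of the vertices remain, producing a subgraph $G'$ satisfying the minimum-degree condition together with edge density exceeding $1 - 1/(r-1) + \varepsilon'$ for some $\varepsilon' > 0$ (using that $1/(r-1) - 1/r > 0$). Applying the induction hypothesis to $G'$ with parameter $r-1$ then furnishes a copy of $K_r(B)$ inside $G'$, with $B$ of order $\log |V(G')|$ and chosen large enough that the final constants work out.

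Next, for each vertex $v \in V(G') \setminus V(K_r(B))$, let $N_i(v)$ denote the size of its neighborhood in the $i$-th part of the blow-up. The minimum-degree hypothesis forces $\sum_i (B - N_i(v))$ to be small, and a double-count of non-edges between $V(K_r(B))$ and its complement shows that the set of \emph{good} vertices $v$ with $N_i(v) \geq b$ for every $i$ has size linear in $|V(G')|$. For each good $v$, I would select a distinguished $b$-subset of $N(v) \cap V_i$ in each part, yielding a \emph{type} in $\binom{B}{b}^r$. By pigeonhole, some type is realized by $b$ different good vertices, and these $b$ vertices together with the chosen $b$-subsets form the desired $K_{r+1}(b)$.

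The main obstacle is quantitative: threading the exact denominator $2^{r+1}(r-1)!$ through the induction. Going from $r-1$ to $r$ introduces a factor of roughly $2(r-1)$ in the denominator --- a factor of $2$ from keeping $B$ strictly larger than $b$ so that $\binom{B}{b}^r$ remains sub-polynomial in $n$, and a factor of $r-1$ from balancing the averaging step. Making these constants line up with $2^{r+1}(r-1)!$ exactly, rather than only up to a universal constant, is where the care is required; everything else reduces to standard averaging and pigeonhole.
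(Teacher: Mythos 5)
The paper cites the Erd\H{o}s--Stone theorem from [ES] and gives no proof, so there is nothing internal to match against; your sketch follows the classical induction-on-$r$ argument (Bollob\'as--Erd\H{o}s) and is essentially sound, but one step is stated in a way that would be wrong if taken literally. You write that the minimum-degree hypothesis ``forces $\sum_i (B - N_i(v))$ to be small.'' For a single vertex $v$ this is false: minimum degree $(1-\frac1r+\frac\e2)|V(G')|$ only caps $v$'s non-neighbours at roughly $(\frac1r-\frac\e2)|V(G')|$, which dwarfs $rB=\Theta(\log n)$, so $v$ may well be non-adjacent to the \emph{entire} blow-up. What rescues the step is the aggregate double count you mention right after: counting non-edges between $V(K_r(B))$ and its complement from the blow-up side gives at most $rB\cdot(\frac1r-\frac\e2)|V(G')|=(1-\frac{r\e}{2})B\,|V(G')|$, and since a bad vertex contributes at least $B-b$ of these, the number of bad vertices is at most $\frac{(1-r\e/2)B}{B-b}|V(G')|$. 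This is a proper fraction of $|V(G')|$ precisely when $B/b>2/(r\e)$, so ``keeping $B$ strictly larger than $b$'' has to be read quantitatively: the ratio depends on $\e$ and tends to infinity as $\e\to 0$.

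Given that, the constant does thread through, but not quite for the reason you suggest (``a factor of $2$ from keeping $B$ strictly larger than $b$''). The saving grace is the jump in the induction parameter: after cleanup, $G'$ meets the density threshold at level $r-1$ with $\e'=\frac1{r(r-1)}+\frac\e2$, which is bounded below by $\frac{1}{r(r-1)}$ \emph{independently} of $\e$. So the inductive $B$ is roughly $\frac{\log n'}{r\,2^{r}(r-1)!}$, and dividing by $B/b\approx 2/(r\e)$ gives $b\gtrsim\frac{\e\log n}{2^{r+1}(r-1)!}$ as claimed; the factor $2(r-1)$ you identified arises as $\frac{2}{r\e}\cdot r(r-1)\e = 2(r-1)$ once the jump from $\e$ to $\e'$ is accounted for. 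The pigeonhole step then checks out because $\binom{B}{b}^r\le(eB/b)^{br}$ is $n^{o(1)}$ for $b = O(\log n/\log(1/\e))$, which is consistent with the target $b$. One last small point: the $b$ good vertices sharing a type need not be pairwise non-adjacent, but this is harmless since ``contains $K_{r+1}(b)$'' is a subgraph, not an induced-subgraph, statement.
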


\noindent We will refer to theorems analogous to Theorem~\ref{thm:es} as \emph{structural supersaturation} results.

In more recent work, Alon and Shikhelman \cite{AS} considered  generalized extremal problems involving counting copies of some fixed subgraph rather than edges.  To be precise, they were interested in determining values of 
\[
    \ex_T(n,F)=\max \setof{n_T(G)}{\text{$G$ is an $F$-free graph on $n$ vertices}},
\]
where $n_T(G)$ is the number of copies of $T$ in $G$. In particular,
$\ex_{K_2}(n,F)=\ex(n,F)$. Their paper lead to many investigations by various
authors for different choices of $T$ and $F$; see \cite{GP,GSTZ,HP,LM,L} for a sample of
authors and results. In this paper, we consider Alon-Shikhelman-type problems where $T$ and $F$ are either cliques or stars.  We also consider supersaturation and structural supersaturation results in this vein.  The following subsections will outline the history of these problems and the new results of this paper.

\subsection{Cliques without cliques} 
\label{sub:cliques_versus_cliques}

The most fundamental Alon-Shikhelman-type problems involve cliques.  As above,
we write $\kt(G)$ for $n_{K_t}(G)$.  Zykov \cite{Z}, along with many others,
showed that $\ex_{K_t}(n, K_{r+1}) = k_t(T(n, r))$, where $T(n, r)$ is the
$r$-partite Tur\'an graph on $n$ vertices.  Bollob\'{a}s \cite{B} discussed the
general problem of minimizing the number of copies of $K_{s}$ in a graph with a
given number, say $N$, of copies of $K_t$, i.e., a supersaturation result for
$\ex_{K_t}(n,K_{s})$.  (Of course, if $N\leq \ex_{K_t}(n,K_{s})$, then this
minimum number is $0$.)  His result gives a bound of the form 
\[
	k_s(G)\geq \psi(N),
\]
where $\psi$ is a function defined implicitly.  
\begin{theorem}[Bollob\'as]\label{thm:bb}
 	For a given $n$, let $\psi(x)=\psi_t^s(x)$ be the maximal convex function defined for $0\leq x\leq \binom{n}t$ such that
 	\[
 		\psi\parens[\bigg]{\parens[\Big]{\frac{n}i}^t\binom{i}t}\leq \parens[\Big]{\frac{n}i}^s\binom{i}s, \qquad \text{for $i=1,2,\ldots,n$}.
 	\]
 	Then, if $G$ is a graph on $n$ vertices with $k_t(G)\geq x$, then $k_s(G)\geq \psi(x)$.
\end{theorem}
A weaker, but slightly more transparent version is the following.

\begin{theorem}\label{thm:nir}
	Let $\theta$ be a real number and $s$ and $t$ be integers with $2\leq t\leq s\leq \theta+1$.  If $G$ is a graph on $n$ vertices such that $k_t(G)\geq \binom{\theta}{t}\parens{n/\theta}^t$, then $k_s(G)\geq \binom{\theta}{s}\parens{n/\theta}^s$.
\end{theorem}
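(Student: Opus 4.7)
My plan is to reduce the statement to Bollob\'as's implicit supersaturation bound cited in the introduction (giving $k_s(G) \geq \psi(k_t(G))$, where $\psi$ is defined implicitly along the Tur\'an curve), by parametrizing $k_t(G)$ in terms of a single real variable and then leveraging monotonicity. For a positive integer $r$, define
\[
f_r(x) = \binom{x}{r}(n/x)^r,
\]
where $\binom{x}{r}$ denotes the polynomial $x(x-1)\cdots(x-r+1)/r!$. The factorization
\[
f_r(x) = \frac{n^r}{r!}\prod_{i=1}^{r-1}\left(1 - \frac{i}{x}\right)
\]
makes it transparent that $f_r$ is continuous and strictly increasing on $[r-1,\infty)$, mapping that interval bijectively onto $[0, n^r/r!)$.

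First, I would let $\theta^{\ast}$ be the unique real number in $[t-1,\infty)$ with $f_t(\theta^{\ast}) = k_t(G)$. Since $k_t(G) \geq f_t(\theta)$ and $f_t$ is increasing, $\theta^{\ast} \geq \theta$; combined with the hypothesis $s \leq \theta + 1$, this also guarantees $\theta^{\ast} \geq s-1$, which is the range needed for $f_s$ to behave well. Second, I would invoke Bollob\'as's result to conclude $k_s(G) \geq f_s(\theta^{\ast})$: the extremal configurations for his bound are balanced Tur\'an-style blowups, on which $k_r$ equals $f_r(\theta^{\ast})$ for every $r$ simultaneously, so $\psi(f_t(\theta^{\ast})) = f_s(\theta^{\ast})$. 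Third, monotonicity of $f_s$ on $[s-1,\infty)$ together with $\theta^{\ast} \geq \theta$ gives $f_s(\theta^{\ast}) \geq f_s(\theta) = \binom{\theta}{s}(n/\theta)^s$, which is the desired inequality.

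The principal obstacle is justifying the identification of Bollob\'as's implicit $\psi$ with $f_s \circ f_t^{-1}$ on the Tur\'an curve; conceptually this is immediate from the fact that the Tur\'an blowups saturate Bollob\'as's bound, but it requires that his bound be quoted in a form that exposes this extremal behavior. A fully self-contained alternative would be to iterate the Moon-Moser-Khadzhiivanov clique-density inequality from $r = t$ up through $r = s-1$, verifying inductively that the lower bound propagated at each step is precisely $f_{r+1}(\theta^{\ast})$. The required polynomial identity, that $r \mapsto f_r(\theta^{\ast})$ saturates the recurrence, can be checked directly and encodes nothing more than the observation that equality holds in Moon-Moser-Khadzhiivanov for the balanced blowup corresponding to $\theta^{\ast}$.
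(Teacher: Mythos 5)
Your proposal takes essentially the same route as the paper, which does not give a proof but cites Bollob\'as (via Theorem~$1.7'$ of Section~VI of~\cite{EXT}) and notes the Moon--Moser/Lov\'asz iteration as a self-contained alternative---both of which you identify, and your $\theta^\ast = f_t^{-1}(k_t(G))$ reparametrization together with the monotonicity step makes the reduction explicit and correct. The one thing worth flagging is that for non-integer $\theta^\ast$ the Bollob\'as extremal graph is a complete multipartite graph with a skewed part rather than a balanced blowup, so the claimed equality $\psi(f_t(\theta^\ast)) = f_s(\theta^\ast)$ should really be the inequality $\psi(f_t(\theta^\ast)) \ge f_s(\theta^\ast)$; that is all you need, and it is exactly what the Moon--Moser iteration (your self-contained alternative) delivers.
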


We cannot find this result in the literature, but it can be proved by iteratively applying the following beautiful theorem of Moon and Moser \cite{MM} and the method outlined in Lov\'asz's Combinatorial Problems and Exercises \cite[Section 10, Question 40]{LL}.

\begin{theorem}[Moon-Moser]
	For any graph $G$ on $n$ vertices and any $s\ge 2$,
	\[
		\frac{k_{s+1}(G)}{k_s(G)} \ge \frac1{s^2-1}\parens[\Big]{s^2\frac{k_s(G)}{k_{s-1}(G)}-n}.
	\]
\end{theorem}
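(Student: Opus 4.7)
The plan is a double-counting argument combined with Cauchy-Schwarz. Let $d_K = |N(K)|$ denote the common-neighborhood size of an $(s-1)$-clique $K$; counting pairs of an $(s-1)$-clique and an extending vertex gives the standard identity $\sum_K d_K = s\cdot k_s(G)$, where the sum ranges over $(s-1)$-cliques $K$.

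The crux is a geometric lemma bounding $|N(S)|$ for an $s$-clique $S = \{v_1,\ldots,v_s\}$ in terms of the $d_{K_i}$, where $K_i = S\setminus\{v_i\}$. I would show that the sets $N(K_i)\setminus N(S)$ are pairwise disjoint and contained in $V(G)\setminus N(S)$: any $w\in N(K_i)\setminus N(S)$ is either $v_i$ itself (which lies in $N(K_i)$ because $S$ is a clique, yet not in $N(S)$ since $v_i\not\sim v_i$) or an external vertex adjacent to every $v_j$ with $j\ne i$ but not to $v_i$, and in both cases the index $i$ is uniquely determined by $w$. Since $N(S)\subseteq N(K_i)$, summing sizes yields $\sum_i d_{K_i} - s\,|N(S)|\leq n - |N(S)|$, which rearranges to
\[
|N(S)| \geq \frac{\sum_{i=1}^{s} d_{K_i} - n}{s-1}.
\]

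Summing over all $s$-cliques $S$, the left side becomes $(s+1)k_{s+1}(G)$ since each $(s+1)$-clique contains exactly $s+1$ sub-$s$-cliques, while swapping the order of summation on the right gives
\[
\sum_S \sum_{i=1}^{s} d_{K_i} = \sum_K d_K^2,
\]
because each $(s-1)$-clique $K$ appears as a $K_i(S)$ once for each of its $d_K$ extensions to an $s$-clique. This produces $(s^2-1)k_{s+1}(G) \geq \sum_K d_K^2 - n\,k_s(G)$. Applying Cauchy-Schwarz in the form $\sum_K d_K^2 \geq (\sum_K d_K)^2/k_{s-1}(G) = s^2\,k_s(G)^2/k_{s-1}(G)$ and dividing through by $k_s(G)$ yields the Moon-Moser inequality. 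The main obstacle I anticipate is the pairwise-disjointness argument in the geometric lemma, in particular correctly accounting for the vertices of $S$ themselves lying in the sets $N(K_i)\setminus N(S)$; the rest is clean bookkeeping via double counting and Cauchy-Schwarz.
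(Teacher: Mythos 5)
Your proof is correct, and since the paper merely cites Moon and Moser~\cite{MM} (with a pointer to Lov\'asz~\cite{LL}) rather than reproving the inequality, there is nothing to compare it against in the text itself. For what it is worth, the route you take --- a local lower bound on $|N(S)|$ for each $s$-clique $S$ via the disjointness of the sets $N(K_i)\setminus N(S)$, followed by double counting over $(s-1)$-cliques and Cauchy--Schwarz --- is essentially the classical argument, and it is clean. Two small points worth making explicit in a final write-up. First, the quantity $k_{s-1}(G)$ in the Cauchy--Schwarz step is the number of $(s-1)$-cliques, so you implicitly need $k_s(G)>0$ (hence $k_{s-1}(G)>0$) for the statement to make sense at all; this is harmless but should be stated. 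Second, your disjointness verification is correct, but it can be said more crisply: if $w\in N(K_i)\cap N(K_j)$ for $i\ne j$, then $w$ is adjacent to every vertex of $K_i\cup K_j=S$, so $w\in N(S)$ (in particular $w\notin S$, since no vertex is self-adjacent), and thus $w\notin N(K_i)\setminus N(S)$. The case analysis you carried out for $w=v_i$ is subsumed by this; what it is really doing is confirming that the lone vertex of $S$ lying in $N(K_i)\setminus N(S)$ is $v_i$ and no other, which your bound already accommodates since you only use containment of the disjoint union in $V(G)\setminus N(S)$.
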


Both Theorem~\ref{thm:bb} and Theorem~\ref{thm:nir} are supersaturation results.  Now we turn our attention to the corresponding structural supersaturation problem.  Nikiforov \cite{N} showed that the conclusion of the Erd\H{o}s-Stone theorem follows even from the weak hypothesis that $G$ contains $\Omega(n^{r+1})$ copies of $K_{r+1}$.

\begin{theorem}[Nikiforov]\label{thm:niki}
	Let $s\geq 2$ and $c$ and $n$ be such that 
	\[
		0<c<1/s!\quad\text{and}\quad n\geq \exp(c^{-s}).
	\]
	If $G$ is a graph with $n$ vertices and $k_s(G)\geq cn^{s}$, then $G$ contains a $K_s(b)$ with $b=\floor{c^s\log n}$. 
\end{theorem}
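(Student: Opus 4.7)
The plan is to proceed by induction on $s$, with base case $s = 2$. There the hypothesis says $e(G) \geq cn^2$, and a K\H{o}v\'ari-S\'os-Tur\'an-style convexity argument on the common neighborhoods of $b$-sets of vertices produces a $K_{b,b} = K_2(b)$; the assumption $n \geq \exp(c^{-s})$ provides the room needed to make the KST bound fail for a $b$ of the claimed size.

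For the inductive step, assume the result for $s-1$ and let $k_s(G) \geq cn^s$. The central step is to find a vertex set $B$ of size $b$ whose common neighborhood $N(B) = \bigcap_{v \in B} N(v)$ both has size $\Omega(n)$ and supports many copies of $K_{s-1}$. To do this, double-count pairs $(B, S)$ with $B \in \binom{V(G)}{b}$, $S$ an $(s-1)$-clique, and $B \subseteq N(S)$: this count is exactly $\sum_S \binom{|N(S)|}{b}$. Combined with the identity $\sum_S |N(S)| = s \cdot k_s(G) \geq scn^s$, convexity (Jensen applied to $\binom{\cdot}{b}$) yields a lower bound on the average of $k_{s-1}(G[N(B)])$ over all $b$-sets $B$. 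Hence some $B$ satisfies $k_{s-1}(G[N(B)]) \geq c' |N(B)|^{s-1}$ for a controlled $c' = c'(c,s,b)$, with $|N(B)|$ still large.

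Now apply the inductive hypothesis to $G[N(B)]$: provided $|N(B)|$ is large enough and $c' \leq 1/(s-1)!$, we obtain a $K_{s-1}(b) \subseteq G[N(B)]$. Since every vertex of $B$ is adjacent to every vertex of $N(B)$ by construction, attaching $B$ as the $s$-th part of this blowup yields the desired $K_s(b) \subseteq G$.

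The main obstacle is tracking constants carefully through the induction. Each step replaces $c$ by a smaller $c'$, and we must still end with blowup size $\lfloor c^s \log n \rfloor$ rather than something smaller. In particular, the intermediate parameter $b$ in the convexity step must be tuned so that the drop $c \to c'$ is only polynomial in $c$, and the hypothesis $n \geq \exp(c^{-s})$ is essential: it guarantees $\log|N(B)|$ remains on the order of $\log n$ after taking the neighborhood, absorbing the multiplicative losses accumulated across the $s-1$ levels of induction. Getting the exponent in $b = \lfloor c^s \log n\rfloor$ exactly right, rather than $c^{f(s)}$ for some larger $f(s)$, is where the argument is tightest.
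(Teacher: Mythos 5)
This theorem is stated in the paper as a cited result of Nikiforov (from reference \cite{N}); the paper does not reprove it, so there is no ``paper's proof'' to compare against. Evaluating your proposal on its own merits, the central step has a gap that I do not think can be repaired as written.

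The difficulty is quantitative and lies in the loss incurred by the convexity/dependent-random-choice step, where you select an entire $b$-set $B$ at once. Writing $N := s\,k_s(G)/k_{s-1}(G)$ for the average common-neighborhood size of an $(s-1)$-clique, Jensen gives $\sum_S \binom{|N(S)|}{b} \ge k_{s-1}(G)\binom{N}{b}$, and dividing by $\binom{n}{b}$ replaces the density $c$ by roughly $c' \approx c \cdot (N/n)^b$. Since $N/n$ is bounded above by a constant $\eta < 1$ (indeed $N \le n$ and for $c < 1/s!$ one has $\eta \le s!\,c < 1$), the factor $(N/n)^b$ with $b = \lfloor c^s \log n\rfloor$ is of order $n^{-c^s \log(1/\eta)}$, a \emph{negative power of $n$ with exponent bounded away from $0$} when $c$ is a fixed constant. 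Thus $c'$ is not merely a smaller constant: it decays polynomially in $n$. The inductive hypothesis at level $s-1$ then demands $|N(B)| \ge \exp\bigl((c')^{-(s-1)}\bigr)$, and with $c' = n^{-\Theta(1)}$ this lower bound is super-polynomial in $n$, which $|N(B)| \le n$ can never meet. Your closing remark that the drop ``must be tuned so that $c\to c'$ is only polynomial in $c$'' identifies the right desideratum, but the mechanism you describe does not achieve it; that tuning is precisely the missing idea.

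There is also a secondary gap: from a lower bound on the \emph{average} of $k_{s-1}(G[N(B)])$ over $b$-sets $B$, you infer the existence of a single $B$ with both $|N(B)|$ large and $k_{s-1}(G[N(B)]) \ge c'|N(B)|^{s-1}$. An average bound alone yields only one of these at a time; you would need to additionally control (say) the contribution of $B$'s with small $|N(B)|$, or choose $B$ to simultaneously beat the average on both statistics. This can sometimes be arranged, but it requires an explicit argument and is not automatic. To make this strategy viable one typically avoids grabbing a whole $b$-set in one shot: Nikiforov's actual argument (and the related Erd\H{o}s--Stone machinery) builds the blowup more incrementally, passing to a single carefully chosen vertex's neighborhood --- or to a bounded-size set --- at each stage, so the density loss at each step is a constant factor rather than a power of $n$, and only at the end is a $K_s(b)$ extracted. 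I would recommend restructuring the induction along those lines.
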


This, together with Theorem~\ref{thm:nir}, proves a structural supersaturation extension of Zykov's result.

\begin{theorem}
	For all $\e>0$, there is a $\d>0$ and an $n_0\in \N$ such that if $G$ is a graph on $n\geq n_0$ vertices and $k_t(G)\geq (1+\e)k_t(T(n,r))$, then $G$ contains a $K_{r+1}(C\log n)$ for some $C=C(\e,r)>0$. 
\end{theorem}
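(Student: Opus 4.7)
The plan is to chain the two supersaturation results just cited: Theorem~\ref{thm:nir} turns the hypothesized lower bound on $\kt(G)$ into a lower bound on $\k{r+1}(G)$, and then Theorem~\ref{thm:niki} converts the resulting positive density of $(r+1)$-cliques into the desired $K_{r+1}$-blowup of logarithmic part size. The whole argument pivots on feeding Theorem~\ref{thm:nir} a real parameter $\theta^{*}$ that is strictly larger than the integer $r$, so that the output bound is nontrivial.

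To calibrate this, I would introduce $f(\theta):=\binom{\theta}{t}/\theta^{t}=(1/t!)\prod_{i=0}^{t-1}(1-i/\theta)$ for real $\theta\geq t$. Each factor is strictly increasing in $\theta$, so $f$ is as well, and a direct count of $K_t$'s in $T(n,r)$ gives $k_t(T(n,r))=(1+o_r(1))f(r)n^{t}$. Since $2\leq t\leq r$, one has $0<f(r)<1/t!$, so by continuity one can fix $\theta^{*}=\theta^{*}(\e,r)>r$ with $f(\theta^{*})\leq (1+\e/2)f(r)$. For all $n$ exceeding some $n_0^{(1)}(\e,r)$, the hypothesis then reads $\kt(G)\geq (1+\e/2)f(r)n^{t}\geq \binom{\theta^{*}}{t}(n/\theta^{*})^{t}$.

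Now Theorem~\ref{thm:nir} applies with this $\theta^{*}$ and $s=r+1$ (the constraint $2\leq t\leq s\leq \theta^{*}+1$ holds because $\theta^{*}>r$) and yields
\[
    \k{r+1}(G)\geq \binom{\theta^{*}}{r+1}(n/\theta^{*})^{r+1}=cn^{r+1},
\]
where $c=c(\e,r):=\binom{\theta^{*}}{r+1}/(\theta^{*})^{r+1}$ is strictly positive precisely because $\theta^{*}>r$. After shrinking $c$ (if necessary) so that $0<c<1/(r+1)!$ and setting $n_0:=\max\set{n_0^{(1)},\exp(c^{-(r+1)})}$, Theorem~\ref{thm:niki} applied with $s=r+1$ delivers a copy of $K_{r+1}(\floor{c^{r+1}\log n})$ inside $G$, which contains $K_{r+1}(C\log n)$ for $C:=c^{r+1}/2$ once $n$ is large enough.

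Conceptually nothing obstructs this; the one point requiring care is the strict perturbation $\theta^{*}>r$. Using the integer choice $\theta=r$ in Theorem~\ref{thm:nir} would return only $\binom{r}{r+1}=0$, and it is the $(1+\e)$-factor in the hypothesis---combined with the strict monotonicity of $f$ at $r$---that buys the slack needed to push $\theta$ past the integer value and produce a positive-density lower bound on $\k{r+1}(G)$, after which Nikiforov's theorem does the rest automatically.
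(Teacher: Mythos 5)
Your proof is correct and takes essentially the same route as the paper: invoke Theorem~\ref{thm:nir} with a parameter $\theta>r$ to upgrade the $K_t$-surplus to a positive-density bound $k_{r+1}(G)\geq cn^{r+1}$, then hand that off to Theorem~\ref{thm:niki}. The paper's proof is terser (it simply asserts the existence of a suitable $\theta>r$ without spelling out the monotonicity of $\binom{\theta}{t}/\theta^{t}$ or the choice of $n_0$), so your version fills in exactly the calibration the paper leaves implicit.
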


\begin{proof}
	The hypothesis on $k_t(G)$ implies that, for some $\theta>r$, we have $k_t(G)\geq \binom{\theta}{t}\parens{n/\theta}^t$.  Thus, by Corollary~\ref{thm:nir}, $k_{r+1}(G)\geq \binom{\theta}{r+1}\parens{n/\theta}^{r+1}$, a constant multiple of $n^{r+1}$.  Now, by Theorem~\ref{thm:niki}, $G$ contains a large blowup of $K_{r+1}$.
\end{proof}


\subsection{Cliques without stars} 
\label{sub:cliques_versus_stars}

If we write $S_{r}$ for $K_{1,r}$, a recent result of Chase \cite{C}, building on work of Gan, Loh, and Sudakov \cite{GLS}, completely determines $\ex_{K_t}(n,S_{r+1})$.

\begin{theorem}[Chase]\label{thm:chase}
	Fix $t\geq 3$.  For any positive integers $n,r\geq 1$, if $n=a(r+1)+b$ where $0\leq b\leq r$, then
	\[
		\ex_{K_t}(n,S_{r+1})=a\binom{r+1}t+\binom{b}t.
	\]
\end{theorem}

It will be useful for us later to state and prove here a ``signpost'' version of Theorem~\ref{thm:chase}
due to Wood \cite{W}, and Engbers and Galvin \cite{EG}.  For $v\in V(G)$, we write $\kt(v)$ for the number of copies of $K_t$ in $G$ that contain vertex $v$.

\begin{theorem}[Wood, Engbers-Galvin]\label{thm:one}
   For any $1\leq r\leq n$, we have 
    \[
        \ex_{K_t}(n,S_{r+1})\leq  \frac{n}{t} \, \binom{r}{t-1} =\frac{n}{r+1} \, \binom{r+1}{t}.
    \]
\end{theorem}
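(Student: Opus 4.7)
The hypothesis that $G$ is $S_{r+1}$-free is equivalent to saying every vertex of $G$ has degree at most $r$. The plan is to count incidences between vertices and copies of $K_t$ in two ways, exploiting this degree bound locally.

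First I would fix a vertex $v \in V(G)$ and bound $\kt(v)$, the number of copies of $K_t$ containing $v$. Any such $K_t$ is determined by a $(t-1)$-clique in $N(v)$. Since $\abs{N(v)} \le r$, even if $N(v)$ induced a complete graph, the number of $(t-1)$-cliques inside it would be at most $\binom{r}{t-1}$. Hence $\kt(v) \le \binom{r}{t-1}$ for every $v$.

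Next I would sum this bound over all vertices. Every copy of $K_t$ in $G$ contains exactly $t$ vertices, so the standard double count gives
\[
    t \cdot \kt(G) = \sum_{v \in V(G)} \kt(v) \le n \binom{r}{t-1},
\]
yielding $\kt(G) \le \frac{n}{t}\binom{r}{t-1}$. The second equality in the statement is the routine identity $\frac{1}{t}\binom{r}{t-1} = \frac{1}{r+1}\binom{r+1}{t}$, which one checks by expanding both sides as $\frac{r!}{t!(r-t+1)!}$.

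There is no real obstacle here; the only subtlety is recognizing that the $S_{r+1}$-freeness imposes a purely local constraint (max degree at most $r$), and that the tightest possible local count of $K_t$'s through a vertex is exactly $\binom{r}{t-1}$, achieved when the neighborhood is a clique. The symmetric form $\frac{n}{r+1}\binom{r+1}{t}$ hints at the extremal construction, namely a disjoint union of copies of $K_{r+1}$ (valid when $(r+1) \mid n$), for which each of the $n/(r+1)$ components contributes $\binom{r+1}{t}$ copies of $K_t$.
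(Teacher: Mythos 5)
Your proof is correct and follows essentially the same double-counting argument as the paper: translate $S_{r+1}$-freeness into the degree bound $\Delta(G)\le r$, bound $\kt(v)\le\binom{r}{t-1}$ locally, and sum over vertices using $t\kt(G)=\sum_v\kt(v)$. The only cosmetic difference is that you spell out the local bound and the algebraic identity, which the paper leaves implicit.
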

\begin{proof}
    Note that being $S_{r+1}$-free is equivalent to having maximum degree at most $r$.  Let $G$ be such a graph on $n$ vertices.  If we count pairs $(v,S)$ where $v$ is a vertex of $G$, $S$ is a $t$-clique in $G$ and $v\in S$ then 
    \[
        t \kt(G) = \sum_{v\in V(G)} \kt(v) = \sum_{v\in V(G)} \k{t-1}(G[N(v)])
\le n \binom{r}{t-1}. 
    \]
\end{proof}
Note that though Theorem~\ref{thm:one} does not give the exact value of $\ex_{K_t}(n,S_{r+1})$, it is asymptotically sharp since the graph $aK_{r+1}$ achieves the bound whenever $n$ is divisible by $r+1$.

In Section~\ref{sec:es}, we prove the following supersaturation result showing that if $G$ contains many copies of $K_t$, then there must be many copies of $S_r$ in $G$. We write $\sr(G)$ for $n_{S_r}(G)$ and note that
\[
    \sr(G) = \sum_{v\in V(G)} \binom{d(v)}r.
\]

\begin{restatable}{theorem}{main}\label{thm:esy}
    Given $2\le t\le r$, for all $\e>0$ there exists $\d>0$ such that if $G$ is a graph on $n$ vertices having
    \[
        \kt(G) \ge (1+\e) \frac{n}{r+1} \, \binom{r+1}t,
    \]
    then $\s{r+1}(G) \ge \d n$.
\end{restatable}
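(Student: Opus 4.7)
The plan is to piggyback on the counting identity from the proof of Theorem~\ref{thm:one} and then sort vertices by degree. Starting from
\[
    t\kt(G) = \sum_{v \in V(G)} \k{t-1}(G[N(v)]) \le \sum_{v \in V(G)} \binom{d(v)}{t-1}
\]
and using the elementary identity $\tfrac{t}{r+1}\binom{r+1}{t} = \binom{r}{t-1}$, the hypothesis $\kt(G) \ge (1+\varepsilon)\tfrac{n}{r+1}\binom{r+1}{t}$ translates into
\[
    \sum_{v \in V(G)} \binom{d(v)}{t-1} \ge (1+\varepsilon)\, n \binom{r}{t-1}.
\]

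Next I would partition $V(G) = A \sqcup B$ with $A = \{v : d(v) \le r\}$ and $B = \{v : d(v) \ge r+1\}$. Vertices in $A$ contribute at most $n\binom{r}{t-1}$ to the sum above, so vertices in $B$ must contribute at least $\varepsilon n\binom{r}{t-1}$. The vertices in $B$ are precisely the centers of copies of $S_{r+1}$, so the task reduces to converting their contribution to $\sum \binom{d(v)}{t-1}$ into a contribution to $\s{r+1}(G) = \sum_v \binom{d(v)}{r+1}$.

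The final step is a short binomial calculation showing that the ratio $\binom{d}{r+1}/\binom{d}{t-1}$ is non-decreasing in $d$ for $d \ge r+1$ (this uses only $r \ge t$), with minimum value $1/\binom{r+1}{t-1}$ attained at $d = r+1$. Combining this with the previous bound yields
\[
    \s{r+1}(G) \ge \sum_{v \in B} \binom{d(v)}{r+1} \ge \frac{1}{\binom{r+1}{t-1}} \sum_{v \in B}\binom{d(v)}{t-1} \ge \frac{\varepsilon\binom{r}{t-1}}{\binom{r+1}{t-1}}\, n,
\]
so one may take $\delta = \varepsilon(r-t+2)/(r+1)$. I do not anticipate a serious obstacle: the argument is just a quantitative refinement of the bookkeeping of Wood and Engbers--Galvin, formalizing the intuition that any vertex whose degree exceeds the $S_{r+1}$-free threshold must pay for its excess $K_t$-contribution with a genuine star.
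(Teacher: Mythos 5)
Your proof is correct, but it takes a genuinely different route from the paper's. The paper proves a convexity lemma showing that $f_{r+1}\circ f_{t-1}^{-1}$ is convex (where $f_s(x)=\binom{x}{s}$), and then applies Jensen's inequality to the sequence $\l(v)=\binom{d(v)}{t-1}$; this yields $\d = \binom{f_{t-1}^{-1}\bigl((1+\e)\binom{r}{t-1}\bigr)}{r+1}$, which is exactly what the sharpness example $G=kK_{s+1}$ attains when $(1+\e)\binom{r}{t-1}=\binom{s}{t-1}$ for integer $s$. You instead split $V(G)$ by degree threshold $r$, observe that low-degree vertices can account for at most $n\binom{r}{t-1}$ of the mass in $\sum_v\binom{d(v)}{t-1}$, so at least $\e n\binom{r}{t-1}$ of it sits on vertices of degree $\ge r+1$; and then you convert star-$(t-1)$-counts into star-$(r+1)$-counts vertex by vertex using the fact that the ratio $\binom{d}{r+1}\big/\binom{d}{t-1}=\frac{(t-1)!}{(r+1)!}\prod_{j=t-1}^{r}(d-j)$ is increasing in $d$ for $d\ge r+1$ with minimum $1/\binom{r+1}{t-1}$. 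This monotonicity check is sound, and combining $\binom{r}{t-1}\big/\binom{r+1}{t-1}=(r-t+2)/(r+1)$ gives the clean explicit constant $\d=\e(r-t+2)/(r+1)$. The trade-off: your argument is shorter, entirely elementary, and produces an explicit linear-in-$\e$ constant, while the paper's convexity approach is heavier but yields the asymptotically optimal $\d$ matching the extremal construction. Both are valid proofs of the stated (non-sharp) theorem.
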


	Note that the bound in Theorem~\ref{thm:esy} is asymptotically sharp.  To see this, let $s>r$ and consider the graph $G$ on $n=k(s+1)$ vertices that is the disjoint union of $k$ copies of $K_{s+1}$, i.e., $G=kK_{s+1}$.  Then, 
	\[
		k_t(G)=k\binom{s+1}t=\frac{n}t\binom{s}{t-1}.
	\]
A straightforward calculation shows that, provided $s\geq r+1$,
\[
	\frac{\frac{n}t\binom{s}{t-1}}{\frac{n}{r+1}\binom{r+1}t}>1,
\]
and so the conditions of Theorem~\ref{thm:esy} are met.  Further, note that 
\[
	s_{r+1}(G)=n\binom{s}{r+1}.
\]
Thus, equality is achieved in the conclusion of Theorem~\ref{thm:esy} with $\d=\binom{s}{r+1}$.

For structural supersaturation, since the star is not vertex transitive, there are different notions of a blowup of $S_{r+1}$; they are all of the form $K_{a,b}$.  The discussion above implies that having a surplus of $K_t$s does not imply even the existence of a $K_{1,r+2}$.  In addition, the classic construction of F\"uredi \cite{F} demonstrates that it is also not possible to guarantee the existence of a $K_{2,r+1}$ (at least in the case when $t=3$).  The following theorem can be read out of his paper.

\begin{theorem}[F\"{u}redi]
	For any $r\geq 1$ there exist infinitely many $n$ so that there is a graph on $n$ vertices which is $K_{2,r+1}$-free and contains $\Omega(n^{3/2})$ triangles.  In particular, knowing that $k_3(G)$ is at least $(1+\e)\ex_{K_3}(n,S_{r+1})$ does not imply the existence of a $K_{2,r+1}$ in $G$.
\end{theorem}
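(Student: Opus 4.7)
The plan is to exhibit, for every odd prime power $q$, a graph $G_q$ on $n = q^2 + q + 1$ vertices (giving infinitely many admissible $n$) that is $K_{2,r+1}$-free yet contains $\Omega(n^{3/2})$ triangles. Because every $K_{2,r+1}$ contains a $K_{2,2}$ as a subgraph, it suffices to build a $K_{2,2}$-free graph with this many triangles. My candidate is the orthogonal polarity graph of $PG(2,q)$: the vertices of $G_q$ are the projective points of $\F_q^3$, and two distinct points $[u], [v]$ are adjacent iff $\langle u, v\rangle = 0$ for a fixed non-degenerate symmetric bilinear form on $\F_q^3$.

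The $K_{2,2}$-freeness is immediate: any common neighbor $[w]$ of distinct $[u]$ and $[v]$ must satisfy $w \in \{u,v\}^\perp$, a one-dimensional subspace of $\F_q^3$, so at most one such $[w]$ exists. For the triangle count, all but the $q+1$ \emph{absolute} points (those with $\langle u,u\rangle = 0$) have degree exactly $q+1$, so $|E(G_q)| = \Theta(q^3)$. For an edge $[u][v]$ with both endpoints non-absolute, the unique projective point $\{u,v\}^\perp$ is distinct from $[u]$ and $[v]$ and is the sole common neighbor, so the edge lies in exactly one triangle. Discarding the $O(q^2)$ edges incident to absolute points perturbs the triangle count by at most $O(q^2)$, yielding $k_3(G_q) = \Theta(q^3) = \Theta(n^{3/2})$.

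For the ``in particular'' clause, Theorem~\ref{thm:one} gives $\ex_{K_3}(n, S_{r+1}) \leq \frac{n}{3}\binom{r}{2} = O(n)$ for fixed $r$, so $k_3(G_q) = \Omega(n^{3/2})$ exceeds $(1+\e)\ex_{K_3}(n, S_{r+1})$ for every fixed $\e > 0$ once $n$ is sufficiently large, while $G_q$ still avoids $K_{2,r+1}$. The main obstacle is the triangle count: the delicate part is quantifying the absolute points' effect, but since they form an $O(\sqrt{n})$ subset with degree $O(\sqrt{n})$, their total impact on the triangle count is $O(n)$, which is lower order than $n^{3/2}$.
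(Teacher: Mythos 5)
The paper offers no proof of this statement, noting only that it ``can be read out of'' F\"uredi's paper; your argument via the Erd\H{o}s--R\'enyi polarity graph of $PG(2,q)$ is correct and is exactly the classical construction underlying F\"uredi's example. The one place to be a touch more careful is the passage from ``each edge with both endpoints non-absolute lies in exactly one triangle'' to the triangle count: since a triangle can contain up to three such edges, the $\frac12 q^3 - O(q^2)$ good edges yield at least $\frac16 q^3 - O(q^2) = \Theta(n^{3/2})$ triangles, which is what you need.
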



\subsection{Stars without stars} 
\label{sub:stars_versus_stars}

Although this case is rather uninteresting, we include it for completeness. 

\begin{proposition}
	If $t>1$, then for $n\geq r+1$,
	\[
		\ex_{S_t}(n,S_{r+1})=\begin{cases}
					n\binom{r}t & \text{if $nr$ is even},\\
					(n-1)\binom{r}t+\binom{r-1}t & \text{otherwise}.
					\end{cases}
	\]
\end{proposition}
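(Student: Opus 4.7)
The plan is to reformulate the problem: since $S_{r+1}=K_{1,r+1}$, a graph $G$ is $S_{r+1}$-free if and only if $\Delta(G)\le r$, and each copy of $S_t$ in $G$ is specified by its center $v$ together with a $t$-subset of $N(v)$. Hence
\[
n_{S_t}(G) \;=\; \sum_{v\in V(G)} \binom{d(v)}{t},
\]
and the problem reduces to maximizing this sum over graphs on $n$ vertices with all degrees at most $r$.

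For the upper bound, the map $d\mapsto \binom{d}{t}$ is non-decreasing, so each summand is at most $\binom{r}{t}$, giving the trivial bound $n\binom{r}{t}$, which matches the claim when $nr$ is even. When $nr$ is odd, $\sum_v d(v) = 2e(G)$ must be even, so not every $d(v)$ can equal $r$; at least one $v$ satisfies $d(v)\le r-1$, whence the bound sharpens to $(n-1)\binom{r}{t}+\binom{r-1}{t}$.

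For the matching lower bound I exhibit extremal graphs. When $nr$ is even, a classical result produces an $r$-regular graph on $n\ge r+1$ vertices, which is $S_{r+1}$-free and realizes $n\binom{r}{t}$. When $nr$ is odd both $n$ and $r$ are odd, forcing $n\ge r+2$ (since $r$ odd makes $r+1$ even), and I must exhibit a graph with degree sequence $(r,r,\dots,r,r-1)$. This sequence is graphical by Erd\H{o}s--Gallai; the binding inequality occurs near $k=r$ and reduces to $r^2\le nr-r-1$, i.e.\ $n\ge r+1+1/r$, which holds since $n\ge r+2$. Alternatively, an explicit construction for the base case $n=r+2$ deletes a near-perfect matching of size $(r+1)/2$ together with one extra edge incident to the unmatched vertex from $K_{r+2}$, producing exactly the desired degree sequence.

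The main technical nuisance is the odd-$nr$ construction: giving a uniform ad hoc recipe for all admissible $n$ requires some case work (since a disjoint union of the $n=r+2$ gadget with $r$-regular blocks only covers $n$ with $n-(r+2)\ge r+1$), but invoking Erd\H{o}s--Gallai on $(r,\dots,r,r-1)$ handles every admissible $n$ at once.
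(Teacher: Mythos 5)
Your proof is correct and follows essentially the same route as the paper: reformulate the condition as $\Delta(G)\le r$, write $s_t(G)=\sum_v\binom{d(v)}{t}$, bound each term by $\binom{r}{t}$, and use the parity of $\sum_v d(v)$ to sharpen the bound when $nr$ is odd. The only difference is in level of detail: the paper asserts tersely that the count ``is maximized when $G$ is as close to $r$-regular as possible'' and that the requisite near-regular graphs exist, whereas you spell out the parity obstruction for the upper bound and verify via Erd\H{o}s--Gallai that the degree sequence $(r,\dots,r,r-1)$ is graphical. Your extra care on realizability is sound (and is the one place the paper's proof is genuinely hand-wavy), so you've given a somewhat more complete argument for the same result.
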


\begin{proof}
	Since each degree is at most $r$, we have that $s_t(G)=\sum \binom{d(v)}t$ is maximized when $G$ is as close to $r$-regular as possible.  If $nr$ is even, then there is an $r$-regular graph and otherwise there is a graph where one vertex has degree $r-1$ and all others have degree $r$.
\end{proof}

One can also prove a rather uninteresting supersaturation result in this case.  Since both the number of $S_t$s and the number of $S_{r+1}$s are a function of the degree sequence, it is easy to check that an excess of $\e n\binom{r}t$ copies of $S_t$ yields at least $\e n(r-t+1)/t$ copies of $S_{r+1}$.  The extremal graph is as regular as possible. 

No structural supersaturation theorem for this case is true. Any $(r+1)$-regular graph has a fixed fraction more $S_t$s than $\ex_{S_t}(n,S_{r+1})$, without containing any $S_{r+2}$.  The same F\"uredi example from the previous section is almost regular and hence contains at least $(1+\e)\ex_{S_t}(n,S_{r+1})$ copies of $S_t$ without having a $K_{2,r+1}$.


\subsection{Stars without cliques} 
\label{sub:stars_without_cliques}


This case is substantially more difficult than the others we've encountered up
to this point. Caro and Yuster \cite{CY}
considered the related problem of determining, for a graph $H$ and $t\geq 1$,
\[
	\ex_t(n,H)=\max\setof[\Big]{f_t(G)}{\text{$G$ is $H$-free on $n$ vertices}},
\]
where
\[
	f_t(G)=\sum_{v\in V(G)} d_G(v)^t.
\]
Note that the values of $\ex_t(n,K_{r+1})$ and $\ex_{S_t}(n,K_{r+1})$ are
asymptotically equal as $n\to \infty$. Caro and Yuster showed that the extremal
graph for $H=K_{r+1}$ and $t=1,2,3$ is the Tur\'an graph, and asked if this was
true for larger $t$. Bollob\'as and Nikiforov \cite{BN} gave a nearly complete answer. We sum up their results in the following theorem:

\begin{theorem}[Bollob\'as, Nikiforov]\label{thm:bn_result}
For all $r \ge 2$ and $t > 0$ there exists $c = c(t,r)$ such that if some
$K_{r+1}$-free graph $G$ of order $n$ satisfies $f_t(G) = \ex_t(n,K_{r+1})$, then $G$ is
a complete $r$-partite graph having $r-1$ vertex classes of size $cn+o(n)$.
Furthermore, if $t < r$ and $n$ is sufficiently large, then the Tur\'an graph
$T_r(n)$ realizes $\ex_t(n,K_{r+1})$, but for $t \ge r+\sqrt{2r}$, $\ex_t(n,K_{r+1}) >
f(t,T_r(n))$.
\end{theorem}

Considering the graphon version $\ex_{S_t}(W,K_{r+1})$ of the stars without
cliques problem, we extend the Bollob\'as-Nikiforov result in two ways. First,
we specify more precisely the sizes of the vertex classes of non-Tur\'an
solutions. Second, we prove the existence of a value $t^\ast(r)$ such that when
$t < t^\ast$, the Tur\'an graphon uniquely realizes $\ex_{S_t}(W,K_{r+1})$ and
when $t \ge t^\ast$, the non-Tur\'an solution is the unique maximum. We have not
been able to make any progress on either the supersaturation or structural
supersaturation versions of the problem. All the details can be found in
Section~\ref{sec:many_stars}.



\section{Supersaturation for cliques without stars} 
\label{sec:es}

In order to prove Theorem~\ref{thm:esy}, we start with a lemma concerning the function $\binom{x}s$ where $x$ is a postive real number.
    We define, for $x\in [0,\infty)$ and $s\in \N_{\ge 1}$,
    \[
        f_s(x) =\begin{cases} 
		\displaystyle \binom{x}s = \frac1{s!} \, x(x-1)\cdots(x-s+1) & \text{if $x\geq s-1$}\\
		\quad 0 & \text{if $0\leq x<s-1$}.
		\end{cases}
    \]

Note that, for $x>s+1$,
\[
	f'_s(x)=\frac{1}{s!}\sum_{i=0}^{s-1} x(x-1)\cdots\bcancel{(x-i)}\cdots (x-s+1)
\]
and
\[
	f''_s(x)=\frac{2}{s!}\sum_{0\leq i<j\leq s-1} x(x-1)\cdots\bcancel{(x-i)}\cdots\bcancel{(x-j)}\cdots (x-s+1).
\]
Also, note that $f_s$ is strictly increasing on $[s-1,\infty)$.  We denote the inverse of $f_s|_{[s-1,\infty)}$ by $f_s^{-1}$.

\begin{lemma}\label{ref:conv}
    For all $1\le t < s$ the function $f_s \on f_t\inv$ is convex on $(0,\infty)$ and strictly convex on $\parens[big]{\binom{s-1}t,\infty}$.
\end{lemma}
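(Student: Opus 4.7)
The plan is to analyze $g := f_s \circ f_t^{-1}$ piecewise. First observe that $f_t^{-1}$ sends $\left(0, \binom{s-1}{t}\right]$ into $[t-1, s-1]$, where $f_s$ vanishes, so $g \equiv 0$ on that subinterval. On the complement $\left(\binom{s-1}{t}, \infty\right)$, write $x := f_t^{-1}(y) > s-1$; both $f_s$ and $f_t$ are smooth with strictly positive derivatives there, and the chain rule gives
\[
    g''(y) = \frac{f_s''(x)\,f_t'(x) - f_s'(x)\,f_t''(x)}{[f_t'(x)]^3}.
\]
Thus strict convexity on $\left(\binom{s-1}{t}, \infty\right)$ reduces to proving $f_s''(x)/f_s'(x) > f_t''(x)/f_t'(x)$ for every $x > s-1$.

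To establish that inequality I would use logarithmic differentiation. Setting $H_s(x) := \sum_{j=0}^{s-1}(x-j)^{-1}$, the product formula gives $f_s'/f_s = H_s$, and another derivative yields $f_s''/f_s' = H_s + H_s'/H_s$. Writing $B := H_s - H_t = \sum_{j=t}^{s-1}(x-j)^{-1}$ and $K_s := -H_s' = \sum_{j=0}^{s-1}(x-j)^{-2}$ (with $K_t$ defined analogously), the target rearranges to
\[
    B + \frac{K_t}{H_t} > \frac{K_s}{H_s}, \quad \text{equivalently,} \quad B\,(H_s H_t + K_t) > (K_s - K_t)\,H_t.
\]

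The hard part will be verifying this last inequality, which I would handle by a weighted-average comparison. Put $a_j := (x-j)^{-1}$; since $x > s-1$, the sequence $a_0 > a_1 > \cdots > a_{s-1} > 0$ is strictly decreasing. The quantity $K_t/H_t = \sum_{j<t} a_j^2 / \sum_{j<t} a_j$ is the $a_j$-weighted mean of $\{a_j\}_{j<t}$, hence at least its minimum $a_{t-1}$; analogously $(K_s - K_t)/B$ is the $a_j$-weighted mean of $\{a_j\}_{j \ge t}$, hence at most $a_t$. Since $a_t < a_{t-1}$ we obtain $K_t B > (K_s - K_t) H_t$, and adding the positive term $B\,H_s H_t$ to the left-hand side recovers the full inequality, giving $g''(y) > 0$ on $\left(\binom{s-1}{t}, \infty\right)$.

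Finally, for convexity on all of $(0,\infty)$ I would just verify the gluing at $y_0 = \binom{s-1}{t}$: the left derivative of $g$ is $0$, while the right derivative equals $f_s'(s-1)/f_t'(s-1) = (1/s)/f_t'(s-1) > 0$; hence the slopes are non-decreasing across the kink, and combined with the strict convexity on $(y_0, \infty)$ established above this yields convexity of $g$ on all of $(0, \infty)$.
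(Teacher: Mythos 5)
Your proof is correct, but it takes a genuinely different route from the paper's. Both arguments reduce, via the chain rule, to showing $f_s''(x)f_t'(x) - f_s'(x)f_t''(x) > 0$ for $x > s-1$, but they diverge at that point. The paper expands $f_s'$, $f_s''$, $f_t'$, $f_t''$ explicitly as sums of products of linear factors, writes the difference $f_s''f_t' - f_s'f_t''$ as a signed double sum, and shows every negative term $T_{i\mid jk}$ (from $f_s'f_t''$) is canceled by a matching positive term (either $T_{jk\mid i}$ when $i<t$ or $T_{ji\mid k}$ when $i\ge t$), with some positive terms left over to give strictness. You instead take logarithmic derivatives, writing $f_s'/f_s = H_s = \sum_{j<s}(x-j)^{-1}$ and $f_s''/f_s' = H_s - K_s/H_s$ with $K_s = \sum_{j<s}(x-j)^{-2}$, so the target becomes $B(H_sH_t + K_t) > (K_s-K_t)H_t$ where $B = H_s - H_t$; this you settle by a clean weighted-mean comparison: $K_t/H_t$ is a weighted mean of the larger values $a_0,\dots,a_{t-1}$ while $(K_s-K_t)/B$ is a weighted mean of the smaller values $a_t,\dots,a_{s-1}$, and $a_{t-1} > a_t$ makes the inequality strict. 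Your route avoids the bookkeeping of term-by-term cancellation in favor of a structural inequality, arguably making strictness more transparent; the paper's route stays entirely at the level of polynomial identities and requires no logarithm. You also spell out the gluing at $y_0 = \binom{s-1}{t}$ (left derivative $0$, right derivative $f_s'(s-1)/f_t'(s-1) = (1/s)/f_t'(s-1) > 0$) more carefully than the paper does, which handles it in a single sentence.
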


\begin{proof}
Note that $f_s\on f_t\inv(x)=0$ if $x\leq \binom{s-1}t$. Further, the derivative is positive if $x>\binom{s-1}t$ and thus it's enough to show strict convexity on $\parens[big]{\binom{s-1}t,\infty}$.
    For convenience we'll denote $f_t\inv(x)$ by $u$, and we may assume that $u>s-1$. Note that 
\[
    \parens{f_s \on f_t\inv}'(x) = f_s'(u) \cdot u', \qquad\text{and}\qquad u' = \frac{1}{f_t'(u)} . 
\]
Thus 
\begin{align*}
    \parens{f_s\on f_t\inv}'' &= f_s''(u) \cdot \parens{u'}^2 + f_s'(u)\cdot u'' 
        = f_s''(u)\cdot \frac{1}{\parens{f_t'(u)}^2} - \frac{f'_s(u)}{\parens{f_t'(u)}^2} \cdot f_t''(u)\cdot u' \\
        &= \frac{f_s''(u)f_t'(u) - f_s'(u)f_t''(u)}{\parens{f_t'(u)}^3}.
\end{align*}
Since $u>t-1$, we have $f'_t(u)>0$ so we need only that the numerator of the above is positive.  To this end, since $s>t$, note that
\begin{align*}
	f''_s(u)f'_t(u)&-f'_s(u)f''_t(u)\\
	&=\frac{2}{s!t!}\Biggl[\sum_{\substack{0\leq i<j\leq s-1\\ 0\leq k\leq t-1}} u(u-1)\cdots\bcancel{(u-i)}\cdots\bcancel{(u-j)}\cdots(u-s+1)\\
	&\hspace{2.25in}\cdot u(u-1)\cdots \bcancel{(u-k)}\cdots (u-t+1)\\
	&\\
	&\qquad-\sum_{\substack{0\leq i \leq s-1\\ 0\leq j<k\leq t-1}} u(u-1)\cdots\bcancel{(u-i)}\cdots (u-s+1)\\
	&\hspace{2.25in}\cdot u(u-1)\cdots\bcancel{(u-j)}\cdots\bcancel{(u-k)}\cdots (u-t+1)\Biggr].
\end{align*}
We'll show that this is non-negative by proving that all the negative terms are
canceled by positive ones.  If we write $T_{ij|k}$ for a typical term in the
first sum and $T_{i|jk}$ for one in the second, then we see that all the terms
with $i,j,k< t$ cancel since $T_{i|jk}$ cancels with $T_{jk|i}$.  The remaining
negative terms are of the form $T_{i|jk}$ with $i\geq t$.  We have that each
such term $T_{i|jk}$ cancels with $T_{ji|k}$.  Strictness of convexity is
guaranteed since some strictly positive terms remain, e.g., the $T_{ij|k}$ with
$i=k$ and $j\geq t$. 
\end{proof}

We are now ready for the proof of the main theorem of this section, which we recall here.

\main*

\begin{proof}
	As in Theorem~\ref{thm:one},
	\[
		n\binom{r}{t-1}(1+\e)\leq t\kt(G)
		=\sum_v \kt(v)
		\leq \sum_v \binom{d(v)}{t-1}.
	\]
	Define $\l(v)=f_{t-1}(d(v))=\binom{d(v)}{t-1}$.  We have
	\[
		\sum_v \l(v)\geq n\binom{r}{t-1}(1+\e)\qquad\text{and}\qquad s_{r+1}=\sum_v \binom{d(v)}{r+1}=\sum_v \binom{f^{-1}_{t-1}(\l(v))}{r+1}.
	\]
	The last equality is true term-by-term noting that if $d(v)<t-1$, and hence $d(v)\neq f_{t-1}^{-1}(f_{t-1}(d(v)))$, the $v$ term in both these sums is zero.  
	
	We define 
	\[
		\ft(\l)=f_{r+1}(f_{t-1}^{-1}(\l)).
	\]
	We will determine the minimum of $\sum_{i=1}^n \ft(\l_i)$ subject to $\sum_{i=1}^n \l_i\geq n\binom{r}{t-1}(1+\e)$.  To be precise, we solve the relaxation where $\l_i\in \R_{\geq 0}$.  Since $\ft$ is convex by Lemma~\ref{ref:conv}, we have
	\[
		\sum_{i=1}^n \ft(\l_i)\geq n\ft\Bigl(\sum_{i=1}^n \l_i\Bigr)\geq n\ft\Bigl(\binom{r}{t-1}(1+\e)\Bigr).
	\]
	Thus we are done, setting $\d=\ft\Bigl(\binom{r}{t-1}(1+\e)\Bigr)$.
\end{proof}


\section{Many stars, no $K_{r+1}$} 
\label{sec:many_stars}

%


 We first note that any extremal graph is complete $r$-partite. This simplifies the problem of finding the largest number of $S_t$s. We then address the graphon version of the problem, determining $\ex_{S_t}(W,K_{r+1})=\ex_t(W,K_{r+1})$.
Erd\H{o}s \cite{E64} proved that, given any $K_{r+1}$-free graph $G$, there is an $r$-partite graph $H$ on the same vertex set satisfying $d_G(v)\leq d_H(v)$ for all vertices $v$.  Hence, there is an $r$-partite optimizer for $\ex_{S_t}(n,K_{r+1})$.  Indeed, Gy\H{o}ri, Pach, and Simonovits \cite{GPS} proved the following substantially stronger result.

\begin{theorem}[Gy\H{o}ri-Pach-Simonovits] \label{thm:gps}
Let $T$ be a complete $k$-partite graph with $t+1$ vertices, let $r \ge k$, and let $n \ge \max(t+2,r+1)$. Then all $K_{r+1}$-free graphs $G$ on $n$ vertices satisfying $n_T(G) = \ex_{T}(n,K_{r+1})$ are complete $r$-partite.
\end{theorem}

\begin{proof}[Sketch of proof]
For a graph $G$ and non-adjacent vertices $u,v \in V(G)$, the \emph{Zykov
symmetrization} of $v$ by $u$, denoted $Z_{u\to v}(G)$, turns $v$ into a clone
of $u$ by setting $N(v) = N(u)$. This technique was introduced by Zykov in
\cite{Z}. Note that Zykov symmetrizations cannot increase the clique number of a
graph, and indeed, one can be chosen that (weakly) increases the number of
copies of $T$ until $G$ is complete multipartite.  (That $G$ becomes complete
multipartite is only guaranteed because $T$ is complete multipartite itself.) At
any cloning step where the graph becomes complete multipartite, one can show
that the graph is transformed from a strict subgraph of a complete multipartite
graph to a complete multipartite graph, strictly increasing the number of copies
of $T$ in the process. 
\end{proof}

Knowing that the optimal graph is complete multipartite leaves only the question of what part sizes are optimal. We solve the problem asymptotically, i.e., we show that there are optimal proportions $\a_1, \a_2, \dots, \a_{r}$ for the part sizes. 
The optimization problem we are trying to solve then is (asymptotically, and ignoring a factor of $1/t!$)
\begin{equation}
    \begin{array}{lrcl}
        \text{Maximize}     & F(\rho_1, \rho_2, \dots, \rho_{r}) &=& \disp \sum_{i=1}^r \rho_i (1-\rho_i)^t \\
        \text{subject to}   & \rho_i &\ge& 0 \\
                            & \disp \sum_{i=1}^r \rho_i &=& 1 .
    \end{array} \label{opt}
\end{equation}

We will naturally start by finding the interior critical points, which must satisfy 
\[
    \nabla F(\rho) =  \lambda (1,1,\dots,1)
\]
for some $\lambda$. Writing $f(\rho) = (1-\rho)^t \rho$ we require that the
vector  $(f'(\rho_1),f'(\rho_2),\dots,f'(\rho_r)) $ is constant. We start with a
basic lemma concerning the derivatives of $f$.

\begin{figure}[ht]
    \begin{center}
        \begin{tikzpicture}
                \begin{axis}[%
                    mathy,
                    xmin=0, xmax=1,
                    ymin=-0.25, ymax=1,
                    width=4in]
                
					\addplot[red, thick, domain=0:1, samples=100,
							  declare function={
							  	t=6;
								f(\x)=(1-\x)^t*\x;
							  }
							]
						{f(x)};
					\addlegendentry{$f(\rho)$};
					
					\addplot[blue, thick, domain=0:1, samples=100,
							 declare function={
							   	t=6;
							   	g(\x)=(1-\x)^(t-1)*(1-(t+1)*\x);
							  }
							]
						{g(x)};
					\addlegendentry{$g(\rho)$};
                \end{axis}
            \end{tikzpicture}
    \end{center}
    \caption{Graphs of $f(\rho)$ and $g(\rho)$ with $t=6$}
    \label{fig:lines}
\end{figure}
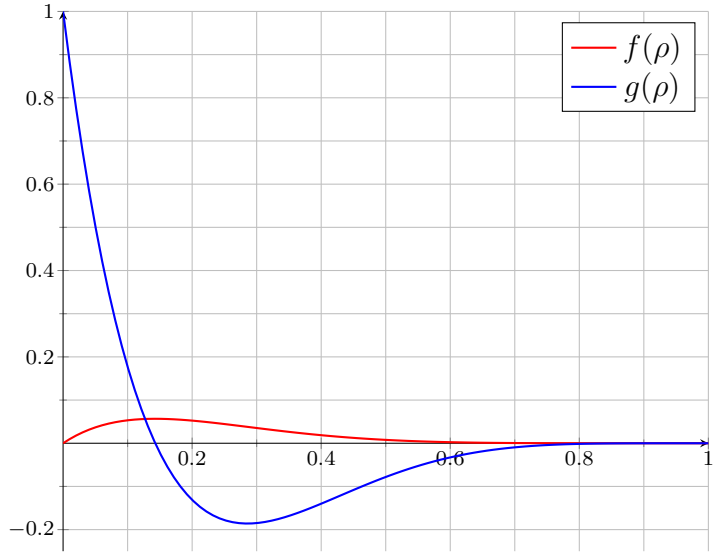

\begin{lemma}\label{lem:g_facts}
	With $f(\rho) = (1-\rho)^t \rho$ and $k\ge 1$ we have
	\[
	    f^{(k)}(\rho) = (-1)^k t_{(k-1)} (1-\rho)^{t-k} ((t+1)\rho - k).
	\]
	In particular the first and second derivatives of $f$ are
	\begin{align*}
	    g(\rho) = f'(\rho) &= (1-\rho)^{t-1}(1-(t+1)\rho) \\
	    h(\rho) = f''(\rho) &= t(1-\rho)^{t-2}((t+1)\rho-2) . 
	\end{align*}
\end{lemma}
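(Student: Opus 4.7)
The plan is to prove the general formula $f^{(k)}(\rho) = (-1)^k t_{(k-1)}(1-\rho)^{t-k}((t+1)\rho - k)$ by induction on $k$, and then deduce the formulas for $g = f'$ and $h = f''$ by simply specializing to $k=1,2$.

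For the base case $k=1$, I would apply the product rule to $f(\rho)=(1-\rho)^t \rho$ directly. This gives
\[
f'(\rho) = -t(1-\rho)^{t-1}\rho + (1-\rho)^t = (1-\rho)^{t-1}\bigl[(1-\rho) - t\rho\bigr] = (1-\rho)^{t-1}(1-(t+1)\rho),
\]
which matches $(-1)^1 t_{(0)}(1-\rho)^{t-1}((t+1)\rho - 1)$ since $t_{(0)} = 1$. This also establishes the claimed formula for $g(\rho)$.

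For the inductive step, assuming the formula holds for some $k \ge 1$, I would differentiate once more, pulling out the common factor $(-1)^k t_{(k-1)}(1-\rho)^{t-k-1}$ to get
\[
f^{(k+1)}(\rho) = (-1)^k t_{(k-1)}(1-\rho)^{t-k-1}\Bigl[-(t-k)\bigl((t+1)\rho - k\bigr) + (t+1)(1-\rho)\Bigr].
\]
The main (and only) obstacle is the algebraic simplification of the bracketed expression. I would expand, collect the coefficient of $\rho$ as $-(t+1)(t-k+1)$, and simplify the constant term $k(t-k) + (t+1)$ to $(k+1)(t-k+1)$ (using $kt-k^2+t+1 = (k+1)(t-k+1)$). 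Factoring out $(t-k+1)$ then yields
\[
f^{(k+1)}(\rho) = (-1)^{k+1} t_{(k-1)}(t-k+1)(1-\rho)^{t-k-1}\bigl((t+1)\rho - (k+1)\bigr),
\]
and the identity $t_{(k-1)}(t-k+1) = t_{(k)}$ completes the induction.

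Finally, substituting $k=2$ into the general formula immediately gives $h(\rho) = f''(\rho) = t(1-\rho)^{t-2}((t+1)\rho - 2)$, as claimed. No subtle obstacles arise; the proof is a routine induction whose only delicate moment is the constant-term factorization in the inductive step.
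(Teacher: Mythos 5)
Your proof is correct. The paper itself only says ``Straightforward'' for this lemma, so there is nothing to compare against; your induction on $k$, with the base case $k=1$ by the product rule, the common factor $(-1)^k t_{(k-1)}(1-\rho)^{t-k-1}$ pulled out in the inductive step, and the factorization $k(t-k)+(t+1)=(k+1)(t-k+1)$ together with $t_{(k-1)}(t-k+1)=t_{(k)}$, is exactly the computation the authors elided.
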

\begin{proof}
	Straightforward.
\end{proof}

We denote values of $g(\rho)$ by $\phi$. If $\phi$ is a value of $g$ with $\phi>0$ there is exactly one solution of $g(\rho)=\phi$, whereas if $\phi\in (\phimin,0]$ (where $\phimin=g(2/(t+1))$ is the minimum value of $g(\rho)$ on $[0,1]$) then there are exactly two solutions. One of these solutions satisfies $1/(t+1) \le \rho < 2/(t+1)$, and the other satisfies $2/(t+1) < \rho \le 1$.

\begin{corollary}\label{cor:interior}
	Interior critical points for \eqref{opt} are either of the form
$(1/r,1/r,\dots,1/r)$, the Tur\'an solution, or
$(\a,\a,\dots,\a,\b,\b,\dots,\b)$, where
\begin{equation}\label{eq:alpha_beta_bounds}
\frac{1}{t+1} \le \alpha < \frac{2}{t+1} \qquad \text{and} \qquad \frac{2}{t+1} < \beta \le 1.
\end{equation}
and for some
$\phi\in (\phimin,0]$ we have $g(\a)=\phi=g(\b)$, which we will refer to as a
skew solution. In the skew solution case we also require that $a\a+b\b=1$, where
$a$ is the number of $\a$s and $b$ is the number of $\b$s. 
\end{corollary}
 
In this section we will prove the following theorem describing the optimal
solution to \eqref{opt}.


%

\begin{theorem}\label{thm:opt}
Let $r, t \ge 2$. The objective function $F$ is maximized at an interior
critical point.
There are at most two possibilities for this maximizing critical
point. One is the Tur\'an solution. The only other possibility is the skew
solution $(\a,\a,\ldots,\a,\b)$ associated to $a=r-1$ and $b=1$ having
$g(\a)=g(\b)$ largest. If any skew solution exists, then this skew solution
exists.
\end{theorem}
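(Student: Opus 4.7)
Plan. Since $F$ is continuous on the compact simplex $\{\rho\in\R_{\ge 0}^r : \sum \rho_i = 1\}$, a maximum is attained, and the plan is to argue this maximum lies in the interior (hence is a critical point classified by Corollary~\ref{cor:interior}) and then to compare the Tur\'an solution against the skew critical points. For the boundary exclusion, I would induct on $r$: if some $\rho_i = 0$ at a maximum, the problem reduces to the $(r-1)$-part version, whose maximum (by induction) is either the $(r-1)$-Tur\'an value $(1-1/(r-1))^t$ or a $(r-1)$-skew value; the $r$-Tur\'an value $(1-1/r)^t$ dominates the former trivially, and I would dominate the latter using the explicit formula derived below together with the monotonicity in Lemma~\ref{lem:monotone}.

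Next, by Corollary~\ref{cor:interior}, each interior critical point is either the Tur\'an point or a skew point $(\underbrace{\a,\ldots,\a}_a,\underbrace{\b,\ldots,\b}_b)$ with $a+b=r$, $\a<2/(t+1)<\b$, $g(\a)=g(\b)=\phi$ for some $\phi\in(\phimin,0]$, and $a\a+b\b=1$. For fixed $(a,b)$, this skew exists precisely when the real-analytic curve $\psi_{a,b}(\phi) := a\a(\phi) + b\b(\phi)$ crosses $1$ on $(\phimin,0]$, where $\a,\b$ are the two branches of $g^{-1}$. The identity $(1-\rho)g(\rho)=(1-\rho)^t-(t+1)f(\rho)$, immediate from Lemma~\ref{lem:g_facts}, then yields at any skew critical point the clean expression
\[
	F\;=\;\frac{1}{t+1}\Bigl[\,a(1-\a)^t + b(1-\b)^t - (r-1)\phi\,\Bigr].
\]

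With this representation in hand, I would invoke Lemma~\ref{lem:monotone}, the technical engine in which legal pairs come in, to conclude, for legal $(r,t)$: (i) $F$ is monotone in $\phi$ along the constraint $\psi_{a,b}(\phi) = 1$, so only the largest admissible $\phi$ is a candidate for the maximum; and (ii) among skew pairs $(a,b)$ with $a+b=r$, the maximizer of $F$ is forced to have $(a,b) = (r-1,1)$. For the existence claim, I would carry out a careful continuity analysis of the curves $\psi_{a,b}$ on $(\phimin,0]$, exploiting that the endpoint $\psi_{a,b}(\phimin) = 2r/(t+1)$ is independent of $(a,b)$ and that the endpoint $\psi_{a,b}(0^-) = a/(t+1) + b$ is minimized at $(a,b) = (r-1,1)$, to show that whenever any $\psi_{a,b}$ meets $1$ then so does $\psi_{r-1,1}$. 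I expect the main obstacle to be verifying the monotonicity of step (ii): since $d\a/d\phi$ and $d\b/d\phi$ are only defined implicitly via $g$, the needed estimates are tight exactly when $(r,t)$ is legal, which is why the legality condition degrades into piecewise cutoffs at small $r$.
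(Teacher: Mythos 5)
Your overall skeleton (boundary exclusion, classification via Corollary~\ref{cor:interior}, then compare critical points) matches the paper, but there are two substantive problems.

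First, your boundary argument is both more complicated than necessary and incomplete. Inducting on $r$ forces you to compare the $r$-part Tur\'an value against possible $(r-1)$-part \emph{skew} solutions, which (as you note) you cannot do without further machinery; moreover the legality hypothesis only holds for $r\ge 6$, so the induction has no proper base. The paper's argument is a one-liner: if $\rho_1=0$ and $\rho_r\neq 0$, set $\rho_1'=\rho_r'=\rho_r/2$; the only terms of $F$ that change contribute $\rho_r(1-\rho_r/2)^t>\rho_r(1-\rho_r)^t$, so boundary points are never maximizers. No induction, no comparison with lower-dimensional skew solutions.

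Second, and more seriously, you attribute both step (i) (within fixed $(a,b)$, the largest $\phi$ wins) \emph{and} step (ii) (among all $(a,b)$, $(r-1,1)$ wins) to Lemma~\ref{lem:monotone}. This is a misidentification. In the paper, the reduction to $(r-1,1)$ is Lemma~\ref{lem:mvt}, which is an entirely separate argument: starting from a critical point for $(a,b)$ with $a<r-1$, the Intermediate Value Theorem produces a root of $L_{a+1,b-1}=1$ with larger $\phi$, and the integral representation \eqref{eqn:integ} together with the Mean Value Theorem for $f$ shows the new critical value of $F$ is strictly larger. This argument does not rely on legality at all. Lemma~\ref{lem:monotone} is used only in the chain leading to the claim that $L_{r-1,1}=1$ has at most two roots (via Lemma~\ref{lem:ab} and Corollary~\ref{cor:noposderiv}), which then gives Corollary~\ref{cor:large} (your step (i)). As written, your plan leaves the key $(a,b)\to(a+1,b-1)$ comparison unproved, and your monotonicity claim for step (i) is stated as if it followed directly rather than via the ``at most two roots'' bound. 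The algebraic identity $(1-\rho)g(\rho)=(1-\rho)^t-(t+1)f(\rho)$ and the resulting expression for $F$ at a critical point are correct and tidy, but nothing in the proposal actually uses them to close either gap.

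Your endpoint observations $\psi_{a,b}(\phimin)=2r/(t+1)$ and $\psi_{a,b}(0^-)=a/(t+1)+b$ (minimized at $b=1$) are correct and in the right spirit for the existence statement, but they do not by themselves show the curve for $(r-1,1)$ meets $1$; the paper gets this as a byproduct of the IVT step inside Lemma~\ref{lem:mvt}.
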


Our approach will be to fix $t$, $a$, and $b$, and consider $\a,\b$ as functions of $\phi$. We are then looking for solutions to 
\[
	\Lab(\phi) = a\a + b\b = 1,
\]
which maximize
\[
	\Fab=af(\a)+bf(\b).
\]
If the context makes it clear, we will omit the subscripts.
We will then consider a critical point $(\a,\a,\ldots,\a,\b,\b,\ldots,\b)$ with $a$ copies of $\a$ and $b$ copies of $\b$ and $\phi=g(\a)=g(\b)$.  If $a<r-1$, we will show that there is a critical point associated to some $\phi'=g(\a')=g(\b')>\phi$ with $a+1$ copies of $\a'$, $b-1$ copies of $\b'$, and a larger value for the objective function.  Thus, we need only consider which critical point associated with the case $a=r-1$ and $b=1$ is best.  We show it is the one with $\phi$ largest.

We begin with some preliminary lemmas.

\begin{lemma}\label{lem:deriv}
	For any real numbers $a,b$ summing to $r$, we have
	\begin{align*}
		\frac{d\Lab}{d\phi}&=\frac{a}{h(\a)}+\frac{b}{h(\b)},\\
		\frac{d\Fab}{d\phi}&=\phi\frac{d\Lab}{d\phi},\text{ and}\\
		\frac{ d^2 \Lab }{d \phi^2}&= \frac{ah'(\a)(h(\b))^3 + bh'(\b)(h(\a))^3}{-(h(\a)h(\b))^3}.
	\end{align*}
\end{lemma}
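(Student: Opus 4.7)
The plan is a routine application of implicit differentiation and the chain rule. Since $\a$ and $\b$ are defined as functions of $\phi$ by the identities $g(\a) = \phi = g(\b)$, differentiating each with respect to $\phi$ yields $h(\a)\,(d\a/d\phi) = 1$ and $h(\b)\,(d\b/d\phi) = 1$, hence $d\a/d\phi = 1/h(\a)$ and $d\b/d\phi = 1/h(\b)$. These two formulas drive everything else. The implicit function theorem applies on the regime of interest since $h(\a), h(\b) \neq 0$ away from the root $\rho = 2/(t+1)$ of $h$ identified in Lemma~\ref{lem:g_facts}.

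For the first formula, simply differentiate $\Lab = a\a + b\b$ term by term using the expressions above. For the second, apply the chain rule to $\Fab = af(\a) + bf(\b)$, recalling that $f' = g$, obtaining $d\Fab/d\phi = a g(\a)/h(\a) + b g(\b)/h(\b)$; then invoke the defining identity $g(\a) = g(\b) = \phi$ to factor $\phi$ out of both terms and recognize what remains as $\phi\,d\Lab/d\phi$.

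For the third formula, differentiate $a/h(\a) + b/h(\b)$ once more. The derivative of $1/h(\a)$ is $-h'(\a)(d\a/d\phi)/h(\a)^2 = -h'(\a)/h(\a)^3$, and analogously for $\b$. Summing the two contributions and placing them over the common denominator $-(h(\a)h(\b))^3$ yields the stated expression.

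There is no serious obstacle; the only subtlety worth flagging is that the elegant identity $d\Fab/d\phi = \phi \, d\Lab/d\phi$ depends essentially on the critical-point coupling $g(\a) = g(\b)$, not on any further property of $f$ or on the particular values of $a$ and $b$. This is what makes the identity useful downstream: derivatives of $\Fab$ can be read off from derivatives of $\Lab$ together with the scalar $\phi$.
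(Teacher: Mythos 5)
Your proof is correct and matches the paper's argument step for step: implicit differentiation of $g(\a)=\phi=g(\b)$ to get $d\a/d\phi = 1/h(\a)$ and $d\b/d\phi = 1/h(\b)$, the chain rule plus $f'=g$ for the second identity, and a direct differentiation (which the paper dismisses as ``a straightforward calculation'') for the third. You even spell out the common-denominator manipulation $-ah'(\a)/h(\a)^3 - bh'(\b)/h(\b)^3 = \bigl(ah'(\a)h(\b)^3 + bh'(\b)h(\a)^3\bigr)/\bigl(-(h(\a)h(\b))^3\bigr)$, which the paper leaves implicit.
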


\begin{proof}
	Since $\phi=g(\a)$, we have that $d\a/d\phi=1/h(\a)$.  Similarly, $d\b/d\phi=1/h(\b)$ from which the first equation follows.  For the second, 
	\[
	    \frac{d F}{d\phi} = a g(\a) \frac{d\a}{d\phi} + b g(\b) \frac{d\b}{d\phi} 
	        = \frac{a \phi}{h(\a)} + \frac{b\phi}{h(\b)} 
	        = \phi \parens[\Big]{ \frac{a}{h(\a)} + \frac{b}{h(\b)}} = \phi \frac{d L}{d \phi}.
	\]
	The third is a straightforward calculation.
\end{proof}

As a consequence, for $\phi_2<\phi_1$, we have
\begin{equation}
    F(\phi_1) - F(\phi_2) = \definite \frac{d F}{d\phi} d\phi{\phi_2}{\phi_1} 
        = \definite \phi \frac{d L}{d \phi} d\phi{\phi_2}{\phi_1} 
        = \Bigl. \phi L \Bigr|_{\phi_2}^{\phi_1} - \definite L d\phi{\phi_2}{\phi_1}.\label{eqn:integ}
\end{equation}

Note that, in the expression for $d^2 L/d\phi^2$ in Lemma~\ref{lem:deriv}, the denominator and the first term on the numerator are always positive and the second term on the numerator is positive provided $\b>3/(t+1)$.  Hence, for $\phi>\pk:=g(3/(t+1))$, we see that $L$ is a convex function of $\phi$.  Our proof will depend on the fact that if $L$ is concave at $\phi$, then $\phi\leq \pk$.  

Now we are ready to begin the proof in earnest. The following sequence of technical lemmas builds our understanding of the relationship between the values of the objective function at the possible internal critical points.

\begin{lemma}\label{lem:mvt}
	If there is a critical point with parameters $\phi$, $a$, and $b$, and $a<r-1$, then there is a critical point associated to $\phi'$, $a+1$, and $b-1$, with $\phi'>\phi$ and $\Fab(\phi)<F_{a+1,b-1}(\phi')$.
\end{lemma}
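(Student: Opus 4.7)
The plan is to produce $\phi'$ using the intermediate value theorem applied to $L_{a+1,b-1}$, then compare $F$-values by integration by parts.

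For existence of $\phi'$: the equality $\Lab(\phi) = 1$ gives
\[
    L_{a+1,b-1}(\phi) = \Lab(\phi) - (\b(\phi) - \a(\phi)) = 1 - (\b - \a) < 1,
\]
while at the endpoint $\phi = 0$, where $\a = 1/(t+1)$ and $\b = 1$, the hypothesis $a < r-1$ (equivalently $b \geq 2$) makes $L_{a+1,b-1}(0) = (a+1)/(t+1) + (b-1) > 1$. Continuity and IVT then yield some $\phi' \in (\phi, 0)$ with $L_{a+1,b-1}(\phi') = 1$. I will take $\phi'$ to be the smallest such, so that $L_{a+1,b-1}(\psi) < 1$ throughout $(\phi, \phi')$.

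For the $F$-comparison: observe that $F_{a+1,b-1}(\phi) - \Fab(\phi) = f(\a) - f(\b)$. Writing $F_{a+1,b-1}(\phi') - F_{a+1,b-1}(\phi)$ as an integral using Lemma~\ref{lem:deriv} and applying integration by parts, the total difference rearranges to
\[
    F_{a+1,b-1}(\phi') - \Fab(\phi) = \int_\a^\b (\phi - g(\rho))\, d\rho + \int_\phi^{\phi'} (1 - L_{a+1,b-1}(\psi))\, d\psi,
\]
where the key simplification uses $f' = g$ to rewrite $f(\a) - f(\b) + \phi(\b - \a)$ as $\int_\a^\b (\phi - g(\rho))\, d\rho$. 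Both integrands are strictly positive in the interior of their domains: for the first, $g$ attains its unique minimum on $[\a,\b]$ at the interior point $2/(t+1)$ with $g(\a) = g(\b) = \phi$, so $g(\rho) < \phi$ on $(\a, \b)$; for the second, the minimality of $\phi'$ forces $L_{a+1,b-1}(\psi) < 1$ on $(\phi, \phi')$. Hence $\Fab(\phi) < F_{a+1,b-1}(\phi')$.

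The main subtle point, and where the hypothesis is actually used, is the existence of $\phi'$. The condition $a < r - 1$ (i.e., $b \geq 2$) is precisely what ensures $L_{a+1,b-1}(0) > 1$ and secures the IVT step; at the borderline $a = r-1$, $b = 1$, one would have $L_{r,0}(0) = r/(t+1)$, which need not exceed $1$, and in any case the only $a = r$ solution is the Tur\'an one, not a skew one.
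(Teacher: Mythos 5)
Your argument is correct and follows essentially the same route as the paper's own proof: IVT applied to $L_{a+1,b-1}$ to produce the smallest root $\phi'$, then the integration-by-parts identity (the paper's equation for $F(\phi_1)-F(\phi_2)$) to compare $F$-values. The only cosmetic difference is at the very end, where the paper invokes the Mean Value Theorem to get $\frac{f(\b)-f(\a)}{\b-\a} < \phi$, while you fold $\phi(\b-\a)+f(\a)-f(\b)$ into the integral $\int_\a^\b(\phi-g(\rho))\,d\rho$ via the Fundamental Theorem of Calculus and observe the integrand is positive because $g<\phi$ on $(\a,\b)$ --- a slightly cleaner packaging that makes visible that the total difference is a sum of two manifestly positive integrals.
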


\begin{proof}
	We have $\Lab(\phi)=1$ and $\a(\phi)<\b(\phi)$, hence $L_{a+1,b-1}<1$.  Also, note that $F_{a+1,b-1}(\phi)=\Fab(\phi)+f(\a)-f(\b)$.  By the Intermediate Value Theorem, there is a root of $L_{a+1,b-1}=1$ between $\phi$ and $0$.  (Note that $L_{a+1,b-1}(0)\geq 2(a+1)/(t+1)+b-1>1$.)  Let $\phi'$ be the smallest such root.  By (\ref{eqn:integ}), we have
	\begin{align*}
		F_{a+1,b-1}(\phi')-F_{a+1,b-1}(\phi)&=\phi' L_{a+1,b-1}(\phi')-\phi L_{a+1,b-1}(\phi)-\definite L_{a+1,b-1}(\rho) d\rho{\phi}{\phi'}\\
		&=\phi'-\phi (1+\a-\b)-\definite L_{a+1,b-1}(\rho) d\rho{\phi}{\phi'}\\
		&=\phi(\b-\a)+(\phi'-\phi)-\definite L_{a+1,b-1}(\rho) d\rho{\phi}{\phi'}\\
		&> \phi(\b-\a),
	\end{align*}
	where the inequality is a consequence of the fact that $L_{a+1,b-1}(\rho)<1$ for $\rho\in (\phi,\phi')$.  Thus,
	\begin{align*}
		F_{a+1,b-1}(\phi')-\Fab(\phi)&=(F_{a+1,b-1}(\phi')-F_{a+1,b-1}(\phi)) +(F_{a+1,b-1}(\phi)-\Fab(\phi))\\
		&>\phi(\b-\a)+f(\a)-f(\b).
	\end{align*}
	So, it suffices to show
	\[
		\frac{f(\b)-f(\a)}{\b-\a}\leq\phi.
	\]
	But by the Mean Value Theorem for some $\rho\in (\a,\b)$, we have 
	\[
		\frac{f(\b)-f(\a)}{\b-\a}=g(\rho).
	\]
	For all $\rho\in (\a,\b)$, we have $g(\rho)<g(\a)=g(\b)=\phi$ and so we are done.
\end{proof}

%

\begin{lemma} \label{lem:sym}
	If $\a<\b\leq 3/(t+1)$ satisfy $g(\a)=\phi=g(\b)$, then
	\[
		\frac{2-(t+1)\a}{(t+1)\b-2} \le 1 .
	\]
\end{lemma}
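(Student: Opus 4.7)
The plan is to recognize that the stated inequality is really about the asymmetry of $g$ around its unique minimum $m := 2/(t+1)$, and to reduce it to a short one-variable calculus check. Dividing numerator and denominator by $t+1$ rewrites the target as $(m-\a)/(\b-m) \le 1$, i.e.\ $\b - m \ge m - \a$. By Lemma~\ref{lem:g_facts}, $h = g' < 0$ on $[0,m)$ and $h > 0$ on $(m,1]$, so $g$ is strictly decreasing on $[0,m]$ and strictly increasing on $[m,1]$; in particular the hypotheses $g(\a)=g(\b)$ and $\a<\b$ force $\a \in [0,m]$ and $\b \in [m,1]$. Thus the lemma is asserting that the right-hand root of $g(\rho)=\phi$ lies at least as far from the minimum $m$ as the left-hand root does.

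Setting $x := m - \a \ge 0$ and using strict monotonicity of $g$ on $[m,1]$, the desired inequality $\b \ge m+x$ is equivalent to $g(m+x) \le g(\b) = g(\a) = g(m-x)$. So it suffices to prove $G(x) := g(m-x) - g(m+x) \ge 0$ for $x \in [0, m]$. Since $G(0) = 0$, this reduces to showing $G' \ge 0$.

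The derivative is $G'(x) = -h(m-x) - h(m+x)$. Using Lemma~\ref{lem:g_facts} together with $(t+1)m = 2$, the factor $(t+1)\rho - 2$ in $h(\rho)$ becomes $\mp(t+1)x$ at $\rho = m \mp x$, so $h(m \mp x) = \mp t(t+1)x(1-m \pm x)^{t-2}$. Substituting,
\[
G'(x) = t(t+1)x\bigl[(1-m+x)^{t-2} - (1-m-x)^{t-2}\bigr] \ge 0,
\]
because $1-m+x \ge 1-m-x \ge 0$ throughout $[0,m]$ (the second inequality uses $m \le 1-m$, which holds for $t \ge 3$). Combined with $G(0)=0$ this gives $G \ge 0$, and hence the lemma. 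I expect no real obstacle: the only conceptual step is the reformulation as $\b - m \ge m - \a$; the rest is a routine differentiation. As a side remark, the hypothesis $\b \le 3/(t+1)$ is not actually used here, and presumably appears only because the stronger range control on $\b$ is what matters in the subsequent lemmas.
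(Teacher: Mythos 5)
Your proof is correct and takes essentially the same route as the paper: both compare $h$ at points reflected about the minimum $m=2/(t+1)$, integrate that pointwise inequality to get $g(m-x)\ge g(m+x)$, and then conclude via monotonicity of $g$ that $\alpha+\beta\ge 4/(t+1)$. Centering the variable at $\alpha$ (via $x=m-\alpha$) rather than integrating upward from $m$ to $\beta$ is only a change of variable, though it does make transparent your correct side-observation that the hypothesis $\beta\le 3/(t+1)$ is not actually needed.
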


\begin{proof}
	First note that if $2/(t+1) \le \rho \le 3/(t+1)$ then we have
	\[
	    -h\parens[\Big]{\frac{4}{t+1}-\rho} = t\parens[\Big]{1 + \rho -\frac{4}{t+1}}^{t-2} ((t+1)\rho-2) 
			 \ge t(1-\rho)^{t-2} ((t+1)\rho-2) 
			 = h(\rho),
	\]
	since by hypothesis $2\rho\ge \frac{4}{t+1}$. As a consequence if $2/(t+1) < \beta \le 3/(t+1)$ then 
	\[
		g\parens[\Big]{\frac4{t+1}-\b} \ge g(\b), \text{\ since\ } 
	    g(\b) - g\parens[\Big]{\frac{4}{t+1}-\b} = \int_{\frac{2}{t+1}}^\b h(\rho) + h\parens[\Big]{\frac{4}{t+1}-\rho} \,d\rho \le 0.
	\]
	Now to prove the result we note that since $g\parens[\big]{\frac{4}{t+1}-\b} \ge \phi = g(\b)$ while $g(\a)=\phi$, and $g$ is decreasing on the interval $\parens[\big]{\frac{1}{t+1},\frac{2}{t+1}}$ we must have $\a \ge \frac{4}{t+1}-\b$, which implies the claim. 
\end{proof}

\begin{lemma}\label{lem:ab}
    If $r-1 \ge 2/((t+1)\a(\pk)-1)$, that is, $(t+1)\a(\pk)\geq (r+1)/(r-1)$, and $\phi \le \pk$ then $\dby{L}\phi \le 0$.
\end{lemma}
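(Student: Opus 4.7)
The plan is to reformulate $\dby{L}{\phi} \le 0$ as a clean inequality in $\a$ and $\b$, apply Lemma~\ref{lem:sym}, and then use monotonicity to reduce to the single boundary case $\phi = \pk$ where the hypothesis enters. Throughout, $L$ denotes $\Lr$, so $a = r-1$ and $b = 1$.

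Lemma~\ref{lem:deriv} gives $\dby{L}{\phi} = (r-1)/h(\a) + 1/h(\b)$. In the regime $\phi \in (\phimin, 0)$ one has $\a < 2/(t+1) < \b$, so the formula $h(\rho) = t(1-\rho)^{t-2}((t+1)\rho - 2)$ from Lemma~\ref{lem:g_facts} yields $h(\a) < 0 < h(\b)$. Multiplying by $h(\a)h(\b) < 0$ reverses the inequality, so $\dby{L}{\phi} \le 0$ is equivalent to $(r-1)h(\b) + h(\a) \ge 0$; after substituting and dividing by $t$ this becomes
\[
    (r-1)(1-\b)^{t-2}((t+1)\b - 2) \ge (1-\a)^{t-2}(2 - (t+1)\a).
\]
Since $\phi \le \pk = g(3/(t+1))$ and $g$ is increasing on $[2/(t+1),1]$, we have $\b \le 3/(t+1)$, so Lemma~\ref{lem:sym} gives $2 - (t+1)\a \le (t+1)\b - 2$. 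Replacing $2-(t+1)\a$ on the right by the larger quantity $(t+1)\b - 2$ and cancelling that positive factor, it suffices to prove $((1-\a)/(1-\b))^{t-2} \le r - 1$.

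As $\phi$ decreases through $(\phimin, \pk]$, $\a$ increases (since $g$ is decreasing on $[0,2/(t+1)]$) and $\b$ decreases (since $g$ is increasing on $[2/(t+1),1]$), so $(1-\a)/(1-\b)$ is a decreasing function of $\phi$ on this interval. Hence it suffices to verify the bound at $\phi = \pk$, where $\b = 3/(t+1)$ gives $1-\b = (t-2)/(t+1)$, and the hypothesis $(t+1)\a(\pk) \ge (r+1)/(r-1)$ gives $1-\a(\pk) \le (t(r-1) - 2)/((r-1)(t+1))$. Taking the ratio and simplifying algebraically (using $(t(r-1)-2) = (t-2)(r-1) + 2(r-2)$) reduces the problem to the numerical inequality
\[
    \left(1 + \frac{2(r-2)}{(t-2)(r-1)}\right)^{t-2} \le r - 1.
\]

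The main obstacle will be verifying this last inequality. Its left-hand side is increasing in $t$ and tends to $e^{2(r-2)/(r-1)}$, and that limit first falls below $r - 1$ exactly at $r = 6$ (where it is about $4.95$ versus $5$). Since $(1 + x/(t-2))^{t-2} < e^x$, the desired inequality then holds for every $t \ge 3$ whenever $r \ge 6$, which is precisely the lower bound on $r$ built into the definition of \emph{legal pair}. The slender margin at $r = 6$ is the reason a crude estimate is not quite enough, and any delicate numerical comparison will come in at this step rather than anywhere in the structural reductions above.
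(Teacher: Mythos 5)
Your proof follows the paper through the first reduction: both derive
\[
    (r-1)(1-\b)^{t-2}((t+1)\b-2) \ge (1-\a)^{t-2}(2-(t+1)\a),
\]
and both invoke Lemma~\ref{lem:sym}. But from there the routes diverge, and yours leaves a gap. The paper never drops the factor $\frac{2-(t+1)\a}{(t+1)\b-2}$; instead it rewrites $\bigl(\frac{1-\a}{1-\b}\bigr)^{t-2}$ \emph{exactly}, using the identity
\[
    \parens[\Big]{\frac{1-\a}{1-\b}}^{t-1} = \frac{(t+1)\b-1}{(t+1)\a-1}
\]
(a consequence of $g(\a)=g(\b)$), arriving at the product $\frac{(t+1)\b-1}{(t+1)\a-1}\cdot\frac{1-\b}{1-\a}\cdot\frac{2-(t+1)\a}{(t+1)\b-2}$, two of whose factors are at most $1$. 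The remaining quantity $\frac{(t+1)\b-1}{(t+1)\a-1}$ is increasing in $\phi$, and at $\phi=\pk$ it equals $\frac{2}{(t+1)\a(\pk)-1}$, which is $\le r-1$ \emph{exactly by the stated hypothesis}. No further numerical input is needed. You discard that factor via Lemma~\ref{lem:sym}, reduce to $\bigl(\frac{1-\a}{1-\b}\bigr)^{t-2}\le r-1$, bound $1-\a(\pk)$ from above using the hypothesis, and are then left needing the \emph{separate} inequality
\[
    \parens[\Big]{1+\frac{2(r-2)}{(t-2)(r-1)}}^{t-2}\le r-1,
\]
which is not a consequence of the lemma's hypothesis. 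You justify it by appealing to $r\ge 6$ and $t\ge 3$ ``built into the definition of legal pair,'' but legality is \emph{not} an assumption of Lemma~\ref{lem:ab}; the only assumption is $(t+1)\a(\pk)\ge(r+1)/(r-1)$. To make your argument airtight you would need to show that this hypothesis itself forces $r\ge6$ and $t\ge3$ (this is true, but requires an extra argument --- e.g.\ note $(t+1)\a(\pk)<2$ forces $r\ge4$, $(t+1)\a(\pk)=1$ at $t=2$ rules out $t=2$, and $(t+1)\a(\pk)\to c^*\approx 1.405<3/2$ rules out $r\le5$), whereas the paper's use of the identity makes the hypothesis do all the work and sidesteps this entirely. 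One small sign slip: after noting $\a$ increases and $\b$ decreases as $\phi$ decreases, you conclude the ratio $(1-\a)/(1-\b)$ is a \emph{decreasing} function of $\phi$; it is in fact \emph{increasing} in $\phi$, though you do correctly evaluate at the right endpoint $\phi=\pk$.
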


\begin{proof}
    We have, with $L=L_{r-1,1}$,
    \begin{align*}
        \dby{L}{\phi} &= \frac{r-1}{h(\a)} + \frac{1}{h(\b)} \\
            &= \frac{ (r-1) t(1-\b)^{t-2} ((t+1)\b-2) +  t(1-\a)^{t-2} ((t+1)\a-2)}{h(\a)h(\b)}.
    \end{align*}
    The first term in the numerator is positive and the second is negative. The denominator is negative. Thus $\dby{L}{\phi} \le 0$ precisely if     
    \[
        (r-1)(1-\b)^{t-2}((t+1)\b-2) \ge  (1-\a)^{t-2} (2-(t+1)\a),
    \]
    i.e.,
    \[
        \parens[\big] {\frac{1-\a}{1-\b}}^{t-2} \cdot \frac{2-(t+1)\a}{(t+1)\b-2} = \frac{(t+1)\b-1}{(t+1)\a-1} 
                 \cdot \frac{1-\b}{1-\a} \cdot \frac{2-(t+1)\a}{(t+1)\b-2}\le r-1,
    \]
	where we used the fact that 
	\[
		\parens[\Big]{\frac{1-\a}{1-\b}}^{t-1} = \frac{(t+1)\b-1}{(t+1)\a-1},
	\]
	which is a simple consequence of the fact that $g(\a)=g(\b)$. Both of the ratios $\frac{1-\b}{1-\a}$ and $\frac{2-(t+1)\a}{(t+1)\b-2}$ are at most one; the first because $\b\ge \a$, the second because of Lemma \ref{lem:sym}, so it is sufficient to prove that $\frac{(t+1)\b-1}{(t+1)\a-1} \le r-1$. This fraction is  monotonically increasing in $\phi$ and we have, by hypothesis,
    \[
        \frac{(t+1)\b-1}{(t+1)\a-1} \le \frac{(t+1)\b(\pk)-1}{(t+1)\a(\pk)-1} =
\frac{2}{(t+1)\a(\pk)-1} \le r-1.
    \]
\end{proof}

We require separate arguements for different pairs $(r,t)$. Call a pair $(r,t)$
\begin{itemize}
\item \emph{type A} if $t \le r$.
\item \emph{type B} if $r \ge 7$ and $t \ge r+1$.
\item \emph{type C} if $2 \le r \le 6$ and $t \ge 3r-1$.
\end{itemize}

\begin{lemma}\label{lem:typea}
If $(r,t)$ is a type A pair, then no skew solution exists.
\end{lemma}

\begin{proof}
Recall from (\ref{eq:alpha_beta_bounds}) that $\alpha > 1/(t+1)$. By repeated
application of Lemma~\ref{lem:mvt} we have $a=(r-1)$ and thus
\[ \beta = 1-(r-1)\alpha > \frac{2}{t+1} \implies \alpha < \frac{t-1}{(r-1)(t+1)}. \]
We conclude
\[ \alpha \in \left(\frac{1}{t+1}, \frac{t-1}{(r-1)(t+1)}\right). \]
Of course, if this interval is empty then we have no choices for $\alpha$ and,
due to Lemma \ref{lem:mvt}, no skew solution exists. Thus in order for a
skew solution to exist we must have
\[ \frac{t-1}{(r-1)(t+1)} > \frac{1}{t+1}, \]
which implies $t > r$, so for $t \le r$ no skew solution exists.
\end{proof}

 \begin{lemma}\label{lem:monotone}
	The hypothesis of Lemma~\ref{lem:ab} holds, that is,
	\[
		(t+1)\a(\pk)\geq \frac{r+1}{r-1},
	\]
	for all $(r,t)$ that are type B. 
 \end{lemma}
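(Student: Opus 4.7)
The plan is to rephrase the inequality $(t+1)\a(\pk) \geq (r+1)/(r-1)$ as a clean analytic statement and then verify it by monotonicity plus case-checking.

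First I would algebraically reduce. Using $\pk = g(3/(t+1)) = -2((t-2)/(t+1))^{t-1}$ and setting $X = (t+1)\a(\pk) \in [1,2]$, the defining relation $g(\a(\pk)) = \pk$ simplifies to $(t+1-X)^{t-1}(X-1) = 2(t-2)^{t-1}$. Substituting $z = X-1$, the map $\phi(z) := (t-z)^{t-1} z$ has derivative $t(t-z)^{t-2}(1-z)$ and is strictly increasing on $[0,1]$. So the desired inequality $X \geq (r+1)/(r-1)$, equivalently $z \geq 2/(r-1)$, reduces to $\phi(2/(r-1)) \leq 2(t-2)^{t-1}$, which after clearing denominators becomes $(1 + c/(t-2))^{t-1} \leq r-1$, where $c := 2(r-2)/(r-1)$.

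The key analytic step is the monotonicity claim: for any $c \in (0,2)$, the function $n \mapsto (1+c/n)^{n+1}$ is strictly decreasing on $(0,\infty)$. Taking logarithms, differentiating, and substituting $v = c/n$, this reduces to showing $\psi(v) := cv + v^2 - c(1+v)\log(1+v) > 0$ for $v > 0$. A short check gives $\psi(0) = \psi'(0) = 0$ and $\psi''(v) = 2 - c/(1+v)$, so since $c < 2$, $\psi''$ is positive and strictly increasing on $[0,\infty)$; hence $\psi$ is strictly positive on $(0,\infty)$. Since $r \geq 6$ ensures $c = 2 - 2/(r-1) < 2$, this monotonicity is available for all legal $r$.

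With monotonicity in $t$, it suffices to verify the reduced inequality at the smallest legal $t$ for each $r$. For $r \geq 9$ at $t = 3$, the inequality becomes $(3r-5)^2 \leq (r-1)^3$, i.e., $p(r) := r^3 - 12 r^2 + 33 r - 26 \geq 0$; since $p(9) = 28$ and $p'(r) = 3((r-4)^2 - 5)$ is positive on $[7,\infty)$, this holds for all $r \geq 9$. The cases $(r,t) = (8,4)$ and $(7,5)$ are straightforward rational computations: $(13/7)^3 = 2197/343 < 7$ and $(14/9)^4 = 38416/6561 < 6$. The main obstacle will be the tight case $(r,t) = (6,37)$, which asks for $(183/175)^{36} \leq 5$; since the gap $\log 5 - 36\log(183/175)$ is only about $10^{-4}$, I would invoke the strict alternating Taylor bound $\log(1+x) < x - x^2/2 + x^3/3$ with $x = 8/175$ to get $36\log(183/175) < 1.6092 < 1.6094 < \log 5$.
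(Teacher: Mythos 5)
Your proposal is correct, and it reaches the same key inequality $\parens[\big]{1+\tfrac{c}{t-2}}^{t-1}\le r-1$ with $c=2(r-2)/(r-1)$ that the paper arrives at, albeit via a slightly different change of variable ($z=(t+1)\a-1$ and the increasing map $\phi(z)=(t-z)^{t-1}z$, rather than the paper's direct comparison of $g$-values using that $g$ is decreasing on $[1/(t+1),2/(t+1)]$). Where your argument genuinely improves on the paper's is in the monotonicity step. The paper observes only that the left side tends to $e^c$ as $t\to\infty$ and then says ``checking of explicit values gives the conditions on $r$ and $t$''; it never actually proves the monotonicity in $t$ that this checking tacitly relies on. Your lemma that $n\mapsto(1+c/n)^{n+1}$ is strictly decreasing on $(0,\infty)$ for $c\in(0,2)$ --- proved by differentiating $\log$, substituting $v=c/n$, and showing $\psi(v)=cv+v^2-c(1+v)\log(1+v)>0$ via $\psi(0)=\psi'(0)=0$ and $\psi''(v)=2-c/(1+v)>0$ --- cleanly fills that gap, and the hypothesis $c<2$ is exactly what $r\ge 3$ (hence $r\ge 6$) provides. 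The base-case computations are right: the $r\ge 9$ reduction $(3r-5)^2\le(r-1)^3$ with $p(r)=r^3-12r^2+33r-26$, $p(9)=28$ and $p'>0$ on $[7,\infty)$ is fine, as are $(13/7)^3<7$ and $(14/9)^4<6$.

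One small numerical slip in the tight $(r,t)=(6,37)$ case: with $x=8/175$, the truncated Taylor bound gives $36\bigl(x-\tfrac{x^2}{2}+\tfrac{x^3}{3}\bigr)\approx 1.60924$, not $<1.6092$; you should write the chain as $36\log(183/175)<1.6093<1.6094<\log 5\approx 1.60944$, which still goes through. This is a rounding typo, not a defect in the method.
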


\begin{proof}
	It is sufficient to prove that, for type B pairs $(r,t)$, we have
	\[
		g\parens[\Big]{\frac{r+1}{(r-1)(t-1)}}\geq g\parens[\Big]{\frac{3}{t+1}}.
	\]
	Noting that both of these expressions are negative, this is equivalent to
	\[
		\frac{2}{r-1}\parens[\Big]{1-\frac{r+1}{(r-1)(t+1)}}^{t-1}\leq 2\parens[\Big]{1-\frac{3}{t+1}}^{t-1},
	\]
	i.e.,
	\[
		\parens[\Big]{\frac{t-\frac{2}{r-1}}{t-2}}^{t-1}
			=\parens[\Big]{1 + \frac{1+\frac{r-3}{r-1}}{t-2}}^{t-1} \leq r-1.
	\]
	The left-hand side converges, as $t$ tends to infinity, to
$\exp(1+(r-3)/(r-1))$.  The smallest $r$ for which $\exp(1+(r-3)/(r-1))\leq r-1$
is $r=6$ so the lemma holds for all type B pairs.
\end{proof}

\begin{corollary}\label{cor:noposderiv}
   For any type B pair $(r,t)$, there is no root of $L=\Lr = 1$ with $\phi\le\pk$ and $\dby{L}{\phi} > 0$.
\end{corollary}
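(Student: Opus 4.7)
The plan is short: this corollary is a direct packaging of the two immediately preceding lemmas, so my proof will simply chain them together and note there is nothing further to check.

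First I would observe that the hypothesis required by Lemma~\ref{lem:ab}, namely $(t+1)\a(\pk)\geq (r+1)/(r-1)$, is exactly the inequality proved in Lemma~\ref{lem:monotone} under the assumption that $(r,t)$ is legal. So for every legal pair, the hypothesis of Lemma~\ref{lem:ab} is automatically in force and the conclusion $dL/d\phi\leq 0$ holds throughout the range $\phi\leq \pk$ (restricted, of course, to those $\phi\in(\phimin,0]$ for which the skew representation giving $\a<2/(t+1)<\b$ exists, i.e., where $L=L_{r-1,1}$ is even defined).

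From here the corollary follows by a one-line contradiction: any $\phi\leq \pk$ for which $L(\phi)=1$ must, by the combined lemmas, satisfy $dL/d\phi\leq 0$, so no such $\phi$ can satisfy the strict inequality $dL/d\phi>0$. The main obstacle, namely verifying that the monotonicity estimate $(t+1)\a(\pk)\geq (r+1)/(r-1)$ actually holds for all legal pairs, has already been dispatched in Lemma~\ref{lem:monotone} via the exponential-limit calculation showing $\exp(1+(r-3)/(r-1))\leq r-1$ for $r\geq 6$ together with the explicit small-$r$ checks. So no new estimate is required and the writeup can be reduced to a two-sentence deduction citing the two lemmas.
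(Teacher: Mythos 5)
Your proposal is correct and is precisely the (implicit) argument the paper intends: the corollary is nothing more than the conjunction of Lemma~\ref{lem:monotone}, which supplies the hypothesis $(t+1)\a(\pk)\geq (r+1)/(r-1)$ for every legal pair, and Lemma~\ref{lem:ab}, which converts that hypothesis into $\dby{L}{\phi}\le 0$ for all $\phi\le\pk$, leaving no room for a root with strictly positive derivative. No further estimate or new idea is needed, and your two-sentence deduction matches the paper's intent exactly.
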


\begin{lemma}
    For type B pairs $(r,t)$, there are at most two roots of $\Lr=1$.
\end{lemma}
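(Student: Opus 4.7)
The plan is to split the domain $[\phimin, 0]$ of $L = \Lr$ at $\pk$ and exploit the two contrasting behaviors of $L$ on either side.  On $[\pk, 0]$, $L$ is strictly convex, by the discussion preceding Lemma~\ref{lem:mvt}: there $\b \geq 3/(t+1)$, making every term in the expression for $d^2 L / d\phi^2$ from Lemma~\ref{lem:deriv} positive.  On $[\phimin, \pk]$, Lemma~\ref{lem:ab} together with the verification in Lemma~\ref{lem:monotone} gives $dL/d\phi \leq 0$, so $L$ is weakly non-increasing.  I will also use the boundary value $L(0) = (r-1)/(t+1) + 1 > 1$, which follows from $\a(0) = 1/(t+1)$ and $\b(0) = 1$, and the fact that $L$ is real-analytic on the open interval $(\phimin, 0)$, since $\a$ and $\b$ are real-analytic functions of $\phi$ via the inverse function theorem applied to $g$ away from its critical point $\rho = 2/(t+1)$.

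Now suppose for contradiction that $L = 1$ has three roots $\phi_1 < \phi_2 < \phi_3$ in $[\phimin, 0]$.  The argument splits into three cases depending on the location of the roots relative to $\pk$.  If $\phi_2 \leq \pk$, then both $\phi_1$ and $\phi_2$ lie in the monotone region; since $L$ is non-increasing there with $L(\phi_1) = L(\phi_2) = 1$, we have $L \equiv 1$ on $[\phi_1, \phi_2]$, and real-analyticity propagates this to all of $(\phimin, 0)$, contradicting $L(0) > 1$.  If $\phi_1 \geq \pk$, then all three roots lie in the convex region $[\pk, 0]$, contradicting the fact that a non-constant convex function takes any value at most twice.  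In the remaining case $\phi_1 < \pk < \phi_2$, monotonicity on $[\phimin, \pk]$ forces $L(\pk) \leq L(\phi_1) = 1$, and analyticity again upgrades this to the strict inequality $L(\pk) < 1$.  But then on $[\pk, 0]$ the convex function $L - 1$ is negative at $\pk$ and positive at $0$; a standard chord argument (if there were two zeros $c < d$, convexity would force $L - 1 \leq 0$ on $[c,d]$, and then the chord from $(\pk, L(\pk)-1)$ to $(d, 0)$ evaluated at $c$ gives a strictly negative upper bound for $(L-1)(c) = 0$) shows such a function has at most one zero in the interval, contradicting the presence of two roots $\phi_2 < \phi_3$ in $(\pk, 0)$.

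The main obstacle I anticipate is handling the seam at $\pk$ cleanly: the monotonicity on the left and the convexity on the right each allow up to two roots individually, and without more care one would only conclude a bound of three roots.  The key leverage is the real-analyticity of $L$, which lets a single coincidence of values propagate globally to contradict $L(0) > 1$; this is precisely what yields the strict inequality $L(\pk) < 1$ in Case 3 and rules out a second root on the left in Case 1.  Verifying analyticity on the open interval (through the local inversion of $g$ on each of $(0, 2/(t+1))$ and $(2/(t+1), 1)$) is routine, and the endpoint behavior at $\phimin$ (where $\a$ and $\b$ coalesce) does not matter for the propagation argument since it applies on the open interval.
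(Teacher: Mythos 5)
Your argument is correct and rests on the same two structural facts the paper's proof uses: strict convexity of $L$ for $\phi>\pk$ (the remark following Lemma~\ref{lem:deriv}, since the second derivative is strictly positive there) and $dL/d\phi\le 0$ for $\phi\le\pk$ (Lemma~\ref{lem:ab} combined with Lemma~\ref{lem:monotone}). The paper's proof is shorter: it orders three hypothetical roots $\phi_1>\phi_2>\phi_3$, argues via Corollary~\ref{cor:noposderiv} that $dL/d\phi>0$ at the largest root forces $\phi_1>\pk$, observes $L$ must be concave somewhere in $(\phi_3,\phi_1)$ so $\phi_3<\pk$, and concludes by noting $dL/d\phi>0$ at $\phi_3$ contradicts Corollary~\ref{cor:noposderiv} again. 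That version tacitly assumes the crossings of the level $L=1$ are transversal, i.e.\ that $dL/d\phi\ne 0$ at the roots it examines. Your three-case split by the position of the roots relative to $\pk$ avoids that assumption by introducing an additional tool --- real-analyticity of $L$ on the open interval $(\phimin,0)$ together with the boundary value $L(0)>1$ --- to rule out any subinterval on which $L\equiv 1$. The extra ingredient buys an airtight treatment of tangential or degenerate roots at the cost of more casework. Two small remarks: in Case 2 you should invoke \emph{strict} convexity rather than ``non-constant convex'' (a merely non-constant convex function can still take a value infinitely often, e.g.\ one with a flat affine piece, but strictness does hold here and gives what you need immediately). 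And at the endpoint $\phi=0$, $\b$ is also not analytic, since $h(1)=0$ for $t\ge 3$; your argument is unaffected because propagation needs only continuity of $L$ at $0$, but it is worth stating alongside your remark about $\phimin$.
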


\begin{proof}
	Suppose that there are at least three roots of $\Lr=1$, and let $0>\phi_1>\phi_2>\phi_3$ be the three largest. We must have $\dby{L}{\phi}>0$ at $\phi_1$, so by Corollary~\ref{cor:noposderiv}, $\phi_1>\pk$.  As we observed after Lemma~\ref{lem:deriv}, for $L$ to be concave requires $\phi\leq \pk$.  Between $\phi_1$ and $\phi_3$, $L$ must be concave at some point, so $\phi_3<\pk$.  Also, we must have $\dby{L}{\phi}>0$ at $\phi_3$ and this combination is ruled out by Corollary~\ref{cor:noposderiv}.
\end{proof}

\begin{corollary}\label{cor:large}
    For type B pairs $(r,t)$, if $L=\Lr=1$ has multiple solutions, then the one at which $F$ is maximized is the one with $\phi$ largest.
\end{corollary}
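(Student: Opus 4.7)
The plan is to apply the integration-by-parts identity (\ref{eqn:integ}) to the two distinct roots $\phi_1 > \phi_2$ of $\Lr = 1$. Because $\Lr(\phi_i) = 1$ at both roots, the formula collapses to
\[
F(\phi_1) - F(\phi_2) = (\phi_1 - \phi_2) - \definite \Lr(\rho) d\rho{\phi_2}{\phi_1},
\]
so the corollary reduces to showing that $\Lr < 1$ strictly on the open interval $(\phi_2, \phi_1)$: that inequality will give $\int_{\phi_2}^{\phi_1} \Lr\, d\rho < \phi_1 - \phi_2$, and hence $F(\phi_1) > F(\phi_2)$.

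To establish $\Lr < 1$ on the interior, I would first compute $\Lr(0) = (r-1)/(t+1) + 1 > 1$. Since $\phi_1$ is the largest root, continuity then forces $\Lr > 1$ throughout $(\phi_1, 0]$. Next I would use sign-pattern bookkeeping: since $\Lr - 1$ is continuous on $(\phimin, 0]$ and vanishes only at $\phi_1$ and $\phi_2$, it has constant sign on each of the three subintervals $(\phimin, \phi_2)$, $(\phi_2, \phi_1)$, $(\phi_1, 0]$. The sign on $(\phi_1, 0]$ is $+$, and provided $\phi_1$ is a genuine sign-changing zero of $\Lr - 1$, the sign on $(\phi_2, \phi_1)$ must be $-$, which is exactly what is needed.

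The key step—and the main obstacle—is verifying that $\phi_1$ is transversal, i.e., that $d\Lr/d\phi(\phi_1) > 0$. This is the same derivative analysis that appears in the proof of the preceding lemma: a nonpositive derivative at $\phi_1$, combined with the convexity of $\Lr$ on $(\pk, 0]$ established after Lemma~\ref{lem:deriv}, would make $\phi_1$ a local minimum of $\Lr$ of value $1$, so $\Lr \ge 1$ on a whole neighborhood of $\phi_1$; together with Corollary~\ref{cor:noposderiv} and the constraint that $\phi_2$ is the only other root below $\phi_1$, this configuration is untenable (it would either create a third root or force $\Lr \equiv 1$ on a subinterval, both contradicting the previous lemma). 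Once the sign change at $\phi_1$ is in place, the sign analysis completes the proof.
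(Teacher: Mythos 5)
Your argument follows the paper's exactly: apply the integration identity \eqref{eqn:integ} at the two roots, use $\Lr(\phi_i)=1$ to collapse the boundary term to $\phi_1-\phi_2$, and reduce to the claim that $\Lr<1$ on $(\phi_2,\phi_1)$. The paper simply asserts that last inequality, while you supply the sign-change/transversality reasoning (via $\Lr(0)>1$, convexity above $\pk$, and Corollary~\ref{cor:noposderiv}) that the paper leaves implicit (it is essentially the same reasoning already used in the preceding at-most-two-roots lemma), so this is the same approach, just with the key inequality spelled out more carefully.
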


\begin{proof}
	By the previous Lemma, there cannot be three roots of $\Lr=1$.  If there are two, say $0>\phi_1>\phi_2$, then by (\ref{eqn:integ}), we have 
	\[
	    F(\phi_1) - F(\phi_2) 
	        = \Bigl. \phi L \Bigr|_{\phi_2}^{\phi_1} - \definite L d\phi{\phi_2}{\phi_1}
			=\parens{\phi_1-\phi_2}-\definite L d\phi{\phi_2}{\phi_1}>0,
	\]
	since $L<1$ for $\phi\in (\phi_2,\phi_1)$.
\end{proof}

\begin{lemma}\label{lem:typec}
If $(r,t)$ is a type C pair, there is exactly one solution to $L=\Lr=1$.
\end{lemma}

\begin{proof}
We use a weaker condition than that of (\ref{eq:alpha_beta_bounds}): if a skew
solution exists, $\alpha \in [0, \frac{2}{t+1})$. We claim that for $r \ge 2$
and $t \ge 3r-1$, $F$ has exactly one critical point in this range. Consider the
derivative
\[ G(\alpha) = F'(\alpha) = (r-1)g(\alpha) - (r-1)g(1-(r-1)\alpha). \]
We start by proving that $G(0) > 0$ and $G(\frac{2}{t+1}) < 0$. We have
\[
G(0) = (r-1)f'(0) - (r-1)f'(1-(r-1)\cdot 0)
	=(r-1)[f'(0)-f'(1)]
	 > 0,
\]
and
\begin{align*}
G\left(\frac{2}{t+1}\right) 
	&= (r-1)\left[-\left(1-\frac{2}{t+1}\right)^{t-1} +
\left(\frac{2(r-1)}{t+1}\right)^{t-1}(t-2(r-1))\right]\\
	&\le (r-1)\left[-\left(\frac{r-1}{r}+\frac{1}{3r}\right)^{t-1} +
\left(\frac{2}{3}\cdot\frac{(r-1)}{r}\right)^{t-1} (t-2(r-1))\right]\\
	&< (r-1)\left[-\left(\frac{r-1}{r}\right)^{t-1} +
\left(\frac{2}{3}\cdot\frac{(r-1)}{r}\right)^{t-1} t\right]\\
	&=
\frac{3}{2}(r-1)\left(\frac{r-1}{r}\right)^{t-1}\left[t\cdot\left(\frac{2}{3}\right)^t-\frac{2}{3}\right],
\end{align*}
where we note that
\[ t \ge 3r-1 \implies -\left(1-\frac{2}{t+1}\right)^{t-1} \le
-\left(\frac{r-1}{r}+\frac{1}{3r}\right)^{t-1}. \]

Each term but the last is positive. As $r \ge 2$ and $t \ge 3r-1$, we may take
$t \ge 5$ and thus the last term is negative. Therefore $G(\frac{2}{t+1}) < 0$
as claimed.

As $G$ is continuous, by the Intermediate Value Theorem it has at least one root
in $[0, \frac{2}{t+1})$. As $G = F'$, a root of $G$ indicates a
critical point of $F$. To prove $F$ has at most one critical point, we start by
show that $G$ is concave up on $[0, \frac{2}{t+1})$. We have
\[ G''(\alpha) = F^{(3)}(\alpha) = (r-1)f^{(3)}(\alpha) - (r-1)^3f^{(3)}(1-(r-1)\alpha). \]
Now
\[ (r-1)f^{(3)}(\alpha) = (r-1)t(t-1)(1-\alpha)^{t-3}(3-(t+1)\alpha) > (r-1)t(t-1)\left(\frac{t-1}{t+1}\right)^{t-3}, \]
as $(1-\alpha) > (t-1)/(t+1)$ and $(t+1)\alpha < 2$ for $\alpha \in [0,\frac{2}{t+1})$.  Also, note that
\begin{align*}
(r-1)^3f^{(3)}(1-(r-1)\alpha) 
	&= (r-1)^3t(t-1)((r-1)\alpha)^{t-3}(t-2-(r-1)(t+1)\alpha)\\
	&< (r-1)^3t(t-1)\left(\frac{2(r-1)}{t+1}\right)^{t-3}(t-2)\\
	&\le (r-1)\left(\frac{t-2}{3}\right)^2t(t-1)\left(\frac{2(t-2)}{3(t+1)}\right)^{t-3}(t-2)\\
	&< (r-1)t(t-1)\left(\frac{t-1}{t+1}\right)^{t-3}\left(\frac{2}{3}\right)^{t-3}\frac{(t-2)^3}{9},
\end{align*}
as $t \ge 3r-1$ implies $r-1 \le \frac{t-2}{3}$. Thus
\[ G''(\alpha) > (r-1)t(t-1)\left(\frac{t-1}{t+1}\right)^{t-3}\left[1-\left(\frac{2}{3}\right)^{t-3}\frac{(t-2)^3}{9}\right]. \]
The last term is positive for $t \ge 19$. One can check that in the remaining cases, when $5 \le t \le 18$ and $2 \le r \le \frac{t+1}{3}$, $G''(\alpha)$ is still positive.

Thus $G$ is concave up on $[0, \frac{2}{t+1})$ and is positive at one
endpoint but negative at the other, so it has at most one zero on that interval.
We conclude $F$ has at most one critical point on $[0, \frac{2}{t+1})$ and thus
at most one skew solution of type $(\alpha, \alpha, \ldots, \alpha, \beta)$
exists. If that solution exists it trivially has $f'(\alpha)$ largest.
\end{proof}


Now we're ready to complete the proof of our main result.

\begin{proof}[Proof of Theorem~\ref{thm:opt}]
First we show that $F$ is not maximized on the boundary of the domain. Suppose, without loss of generality, that
$\rho_1 = 0$ and $\rho_r \neq 0$. Let $\rho_1' = \rho_r' = \frac{\rho_r}{2}$.
Each term of the sum defining $F$, other than the first and last, remains unchanged.
Originally, the first term was $0$ and the last was $\rho_r(1-\rho_r)^t$. Now
each term is $\frac{\rho_r}{2}(1-\frac{\rho_r}{2})^t$, giving a sum of
$\rho_r(1-\frac{\rho_r}{2})^t > \rho_r(1-\rho_r)^t$. We conclude points on the
boundary cannot be maximizers. 

As the domain of $F$ is closed and bounded and $F$ is continuous, it must
achieve its maximum and thus that maximum must occur at an interior point. By
Corollary \ref{cor:interior}, such points only occur at points of the
form $(\alpha, \alpha, \ldots, \alpha, \beta, \beta, \ldots, \beta)$ where
$\alpha < 2/(t+1) < \beta$ and $g(\alpha) = \phi = g(\beta)$ or at points of the
form $(1/r, 1/r, \ldots, 1/r)$.

If there are no critical points of the first type, and in particular if $(r,t)$
is a type A pair, then the only interior critical point is the Tur\'an solution.
In this case, $F$ must attain its maximum here.

Otherwise, there exists at least one skew critical point $(\alpha, \alpha, \ldots,
\alpha, \beta, \beta, \ldots, \beta)$, say with $a$ many $\alpha$s and $b$ many
$\beta$s. Repeatedly applying Lemma~\ref{lem:mvt}, we see that the critical
point at which $F$ attains its maximum is either the Tur\'an solution or the
skew solution associated with $a=r-1$ and $b=1$. For type C pairs,
Lemma~\ref{lem:typec} guarantees there is only one such solution, and for type B pairs
Corollary~\ref{cor:large} assures it is the solution having $\phi$ largest.
There are only finitely many pairs $(r,t)$ with $r,t \ge 2$ that are not of type
A, B, or C and in each case manual inspection shows no such pair has a critical
point other than the Tur\'an solution or skew solution with $a=r-1$ and $b=1$
with $\phi$ largest.
\end{proof}

We know from \cite{BN} that the asymptotic solution to the
$\ex_{S_t}(W,K_{r+1})$ problem is either the Tur\'an solution or some skew
solution. Theorem~\ref{thm:opt} specifies exactly which skew solution is the
possible maximum. In the last theorem, we prove the existence of a sharp
threshold $t^\ast(r)$ where the solution to the $\ex_{S_t}(W,K_{r+1})$ problem
transitions from the Tur\'an solution to the skew solution.

\begin{theorem}
For any $r \ge 2$, there is $t^\ast = t^\ast(r)$ such that for any integer $t$, $F$
is maximized by the Tur\'an solution when $t < t^\ast$ and by a skew solution
for $t \ge t^\ast$.
\end{theorem}

\begin{proof}
Theorem \ref{thm:bn_result}, together with Theorem~\ref{thm:opt} establishes
that for all $r \ge 2$ there are values of $t$ for which a skew solution is
optimal. Therefore for each $r$ there is a smallest integer $\tau$, $r < \tau
\leq r + \sqrt{2r}$ for which a skew solution is optimal.

Let $f(\rho,t) = \rho(1-\rho)^t$ and, for fixed constant $r$, define
\[ F(\rho,t) = (r-1)f(\rho,t)+f(1-(r-1)\rho,t). \]

Let $\alpha \in (\frac{1}{\tau+1}, \frac{1}{r})$ be such that $F(\alpha, \tau)
> F(\frac{1}{r},\tau)$. We will prove $F(\alpha,\tau+1) > F(\frac{1}{r},\tau+1)$ and
thus as $\alpha \in (\frac{1}{\tau+2},\frac{1}{r}) \supseteq (\frac{1}{\tau+1},\frac{1}{r})$,
\[ \max\limits_{\alpha' \in (\frac{1}{\tau+2}, \frac{1}{r})} F(\alpha',\tau+1) \ge
F(\alpha,\tau+1) > F(\tfrac{1}{r},\tau+1). \]
Note that $F(\frac{1}{r},\tau) = (1-\frac{1}{r})^\tau$. Thus we have
\[ F(\alpha,\tau)\cdot (1-\tfrac{1}{r}) > \left(1-\frac{1}{r}\right)^{\tau+1} =
F(\tfrac{1}{r},\tau+1). \]
Next note that
\begin{align*}
F(\alpha,\tau) \cdot (1-\tfrac{1}{r}) &= \left(1-\frac{1}{r}\right)\biggl(
(r-1)\alpha(1-\alpha)^\tau + (1-(r-1)\alpha)((r-1)\alpha)^\tau\biggr)\\
	&= \left(\left(1-\frac{1}{r}\right) - (1-\alpha) + (1-\alpha)\right)
(r-1)\alpha(1-\alpha)^\tau\\
&\qquad\qquad + \left(\frac{r-1}{r} - (r-1)\alpha +
(r-1)\alpha\right)(1-(r-1)\alpha)((r-1)\alpha)^\tau\\
	&= \left(\alpha-\frac{1}{r}\right)(r-1)\alpha(1-\alpha)^\tau +
(r-1)\alpha(1-\alpha)^{\tau+1} \\
	&\qquad \qquad+(r-1)\left(\frac{1}{r} - \alpha\right)(1 -
(r-1)\alpha)((r-1)\alpha)^\tau\\
	&\qquad\qquad +(1-(r-1)\alpha)((r-1)\alpha)^{\tau+1}\\
	&= F(\alpha,\tau+1) - (r-1)\left(\frac{1}{r}-\alpha\right)(f(\alpha,\tau) -
f(1-(r-1)\alpha,\tau)).
\end{align*}
As $\alpha < \frac{1}{r}$, the sign of this second term depends entirely on
$f(\alpha,\tau) - f(1-(r-1)\alpha,\tau)$. Recall that
\[ \frac{\partial}{\partial\rho}f(\rho,t) = (1-\rho)^{t-1}(1-(t+1)\rho) \]
and note that for fixed $t$, $\frac{\partial f}{\partial \rho} < 0$ for $\rho$
such that $\frac{1}{t+1} < \rho < 1$. Thus $f$ is decreasing on this range, and
as
\[ \frac{1}{\tau+1} < \alpha < 1-(r-1)\alpha < 1 \]
because $\alpha < \frac{1}{r}$, we have $f(\alpha,\tau) >
f(1-(r-1)\alpha,\tau)$. We conclude that
\[ F(\alpha,\tau+1) > F(\alpha,\tau+1) -
(r-1)\left(\frac{1}{r}-\alpha\right)(f(\alpha,\tau)-f(1-(r-1)\alpha,\tau)) >
F(\tfrac{1}{r},\tau+1) \]
as claimed.

Finally, by induction we may apply the same reasoning to any $t > \tau$ to see
that if $\alpha$ is the parameter of a skew solution maximizing $F(\alpha,t)$, then
$F(\alpha,t+1) > F(\frac{1}{r},t+1)$ and so the pair $(r,t+1)$ also has a
skew solution. Defining $t^\ast(r)$ to be the $\tau$ corresponding to $r$
completes the proof.
\end{proof}

\begin{remark}
Though Bollob\'as and Nikiforov did not prove the existence of a sharp
threshold, the best known bounds for $t^\ast$ come from
Theorem~\ref{thm:bn_result}: $r < t^\ast \le r+\sqrt{2r}$. By looking at the
proofs of their theorem and Lemma~\ref{lem:typea}, a nearly complete picture of
the optimal graphon for fixed $r$ and varying $t$ emerges. First, while $t \le
r$, no skew solutions exist and thus the Tur\'an solution is optimal. Then skew
solutions emerge but do not immediately beat the Tur\'an solution. When $t$
reaches $t^\ast$, the optimal skew solution overtakes the Tur\'an
solution and will continue to outperform it indefinitely. While obtaining a
precise estimate of $t^\ast$ remains difficult, by $t = r+\lceil\sqrt{2r}\rceil$
there is a skew solution that is not optimal (but whose stars are easy to count)
that outperforms the Tur\'an solution. We believe, based on numeric evidence,
that the true value of $t^\ast$ is closer to the upper bound, but improving either
bound remains an open question.
\end{remark}

%


\bibliographystyle{amsplain}
\bibliography{erdosstoney}

\end{document}